\allowdisplaybreaks \numberwithin{equation}{section}
\numberwithin{equation}{section}
\newtheorem{theorem}{Theorem}[section]
\newtheorem{lemma}[theorem]{Lemma}
\theoremstyle{definition}
\newtheorem{definition}[theorem]{Definition}
\theoremstyle{remark}
\newtheorem{remark}[theorem]{Remark}
\begin{document}

\title
{Steady Double Vortex Patches with Opposite Signs in a Planar Ideal Fluid}

 \author{Daomin Cao, Guodong Wang}


\begin{abstract}
In this paper we consider steady vortex flows for the incompressible Euler equations in a planar bounded domain. By solving a variational problem for the vorticity, we construct steady double vortex patches with opposite rotation directions  concentrating at a strict local minimum point of the Kirchhoff-Routh function with $k=2$. Moreover, we show that such steady solutions are in fact local maximizers of the kinetic energy among isovortical patches, which correlates stability to uniqueness.

\end{abstract}

\maketitle

\section{Introduction}
In this paper, we study steady planar vortex flows with two concentrated regions of vorticity with opposite signs in a bounded domain. By maximizing the kinetic energy subject to some constraints for the vorticity, we construct a family of steady vortex patch solutions shrinking to a given strict local minimum point of the corresponding Kirchhoff-Routh function with $k=2$. This generalizes the result of Turkington in \cite{T} for $k=1$.

Our result can be regarded as the desingularization of point vortices. According to the Kirchhoff vortex model, the motion of $k$ point vortices is described by a Hamiltonian system called the Kirchhoff equation with the Kirchhoff-Routh function as the Hamiltonian. In \cite{T3}, Turkington proved that if at initial time the vorticity is concentrated, then it remains concentrated in any finite time interval, and the location of the concentration point satisfies the Kirchhoff equation. In \cite{MPa}, the authors extended the result of \cite{T3} to two concentrated vortices with opposite signs. According to  \cite{MPa}, \cite{T3}, if a family of steady vortices shrink to a point, then it must be the critical point of the corresponding Kirchhoff-Routh function. Now conversely, given a critical point of the Kirchhoff-Routh function, can we construct a family of steady vortices shrinking to the point?

Turkington firstly considered this question. In \cite{T3}, by solving a variational problem for the vorticity, he constructed a family of vortex patch solutions shrinking to a global minimum point of the Kirchhoff-Routh function with $k=1$(in this case the Kirchhoff-Routh function is also called the Robin function). But for some domains with several global minimum points for the Kirchhoff-Routh function, which point the solutions shrink to is unknown. In \cite{CPY}, the authors gave this question a complete answer: given a non-degenerate critical point of the Kirchhoff-Routh funtion for any $k$, there exists a family of steady vortex patches shrinking to that point. The construction in \cite{CPY} was based on reduction method for the stream function, which was very different from the method of that of Turkington.

A very natural question is whether the two methods result in the same solution. The answer is yes for $k=1$ according to the local uniqueness result proved in \cite{CGPY}. As a consequence, we have an energy characterization for the single vortex patch constructed in \cite{CPY}. This idea is used in \cite{CW} to prove stability of vortex patches for $k=1$. But for $k\geq2$, uniqueness is still an open problem. Here in this paper, based on the idea of Turkington, we construct steady double vortex patches($k=2$) with different rotating directions, and show that such solutions are in fact local energy maximizers in rearrangement class. Note that we do not know whether the solutions obtained in this paper are included in \cite{CPY}, because uniqueness of double vortex patches with different rotation direction is still an open problem. In fact in Section 4 we show that uniqueness implies stability.

The study of equilibrium configurations for steady vortex flows has a long history, see for example \cite{AK,AS,Ba,BP,B1,B2,B4,CLW,CPY,EM,EM2,FT,N,Ni,SV,T,T2}. Generally speaking, there are two methods dealing with this problem: the stream-function method and the vorticity method. For steady ideal fluid, the stream function satisfies a semilinear elliptic equation. The stream-function method is to study the corresponding elliptic problem, see \cite{AS,BP,CPY,CLW,N,SV} for example. In this paper we construct steady vortex patches by maximizing the kinetic energy subject to some constraints for vorticity, which is called the vorticity method. The advantage of this method is that we can analyze the energy of the solution, which is essential to prove stability, see \cite{B3},\cite{CW} for example.

This paper is organized as follows. In Section 2, we introduce the vortex patch problem and give some notations. In Section 3, we consider a maximization problem for vorticity, by analyzing the limiting behavior we show that the maximizers are steady solutions of the Euler equations. In Section 3, we prove that the vortex patch solutions obtained in Section 2 are in fact local energy maximizers among isovortical patches(not only maximizers in the maximization problem). In Section 4, stability of these vortex patches is discussed.
\section{Vortex Patch Problem}
In this section, we give a brief introduction to the vortex patch problem.

To begin with, we introduce some notations. Let $D$ be a bounded and simply-connected planar domain with smooth boundary, and $G$ be the Green function for $-\Delta$ in $D$ with zero
Dirichlet boundary condition, which is written as
\begin{equation}\label{333}
G(x,y)=\frac{1}{2\pi}\ln \frac1{|x-y|}-h(x,y), \,\,\,x,y\in
D.
\end{equation}

Given $k$ non-zero real numbers $\kappa_1,\kappa_2,...,\kappa_k$ with $k\geq 1$ being an integer, we define the Kirchhoff-Routh function in the following way
\begin{equation}
H(x_1,\cdots,x_k)=-\sum_{i\neq j}\kappa_i\kappa_jG(x_i,x_j)+\sum_{i=1}^{k}\kappa_i^2h(x_i,x_i)
\end{equation}
where $(x_1,\cdots,x_k)\in D^{(k)}=\underbrace{D\times D\times\cdots\times D}_{k}$ such that $x_i\neq x_j$ for $i\neq j$.
In this paper we focus on the case that $k=2$, $\kappa_1>0,\kappa_2<0$, and the Kirchhoff-Routh function can be written as
\begin{equation}\label{211}
H(x_1,x_2)=-2\kappa_1\kappa_2G(x_1,x_2)+\sum_{i=1}^{2}\kappa_i^2h(x_i,x_i),
\end{equation}
where $(x_1,x_2)\in D^{(2)}$ and $x_1\neq x_2$.

The motion of steady ideal fluid in $D$ with impermeability boundary condition is described by the following Euler system:
\begin{equation}\label{1}
\begin{cases}

(\mathbf{v}\cdot\nabla)\mathbf{v}=-\nabla P \,\,\,\,\,\,\,\,\,\,\,\,\,\text{in $D$},\\
\nabla\cdot\mathbf{v}=0 \,\,\,\,\,\,\,\,\,\,\,\,\,\,\,\,\,\,\,\,\,\,\,\,\,\,\,\,\,\,\,\,\,\text{in $D$},\\
\mathbf{v}\cdot \vec{n}=0 \,\,\,\,\,\,\,\,\,\,\,\,\,\,\,\,\,\,\,\,\,\,\,\,\,\,\,\,\,\,\,\,\,\,\text{on $\partial D$ },
\end{cases}
\end{equation}
where $\mathbf{v}=(v_1,v_2)$ is the velocity field, $P$ is the pressure and $\vec{n}$ is the outward unit normal of $\partial D$. Here we assume that the fluid has unit density.

By defining the vorticity $\omega \triangleq curl\mathbf{v} =\partial_1 v_2-\partial_2v_1$ and using the identity $\frac{1}{2}\nabla|\mathbf{v}|^2=(\mathbf{v}\cdot\nabla)\mathbf{v}+J\mathbf{v}\omega$,
 the first equation of $\eqref{1}$ can be written as
\begin{equation}\label{3}
 \nabla(\frac{1}{2}|\mathbf{v}|^2+P)-J\mathbf{v}\omega=0,
\end{equation}
where $J(v_1,v_2)=(v_2,-v_1)$ denotes clockwise rotation through $\frac{\pi}{2}$. Taking the curl in $\eqref{3}$ gives

\begin{equation}\label{499}
 \mathbf{v}\cdot\nabla\omega=0.
\end{equation}

By divergence-free condition, $\mathbf{v}$ can be written as $\mathbf{v}=J\nabla\psi$ for some function $\psi$ called the stream function. It is easy to verify that $\psi$ satisfies the following equation:
\begin{equation}
 \begin{cases}
-\Delta \psi=\omega\text{ \quad \,\quad in $D$,}\\
\psi= \text{constant} \text{\quad on $\partial D$.}
\end{cases}
\end{equation}
Without loss of generality, we assume that $\psi$ vanishes on $\partial D$ by adding a constant, so formally $\psi(x)=\int_DG(x,y)\omega(y)dy$.
By introducing the notation
 $\partial(\psi,\omega)\triangleq\partial_1\psi\partial_2\omega-\partial_2\psi\partial_1\omega$, $\eqref{499}$ can be written as
\begin{equation}\label{599}
\partial(\omega,\psi)=0.
\end{equation}

Integrating by parts gives the following weak form of $\eqref{599}$:
 \begin{equation}\label{997*}
  \int_D\omega\partial(\xi,\psi)dx=0
  \end{equation}
  for all $\xi\in C_0^{\infty}(D)$. Note that for $\omega\in L^{\infty}(D)$, $\psi\in W^{2,p}(D)$ for any $1\leq p<+\infty$ by $L^p$ estimate thus $\psi\in C^{1,\alpha}(D)$ for some $0<\alpha<1$ by Sobolev embedding, so the definition of weak solution makes sense for all $\omega\in L^{\infty}(D)$.

 In the sequel, $I_{A}$ will denote the characteristic function of some measurable set $A$, i.e., $I_A\equiv1$ in $A$ and $I_A\equiv0$ elsewhere. If $\omega$ has the form $\omega=\lambda I_{A}$ and satisfies \eqref{997*}, we call it a steady vortex patch(see section 4 for the definition of non-steady vortex patch). Here $\lambda\in \mathbb R$ represents the vorticity strength. When $\lambda<0$, the fluid with vorticity $\omega=\lambda I_{A}$ rotates clockwisely in $A$(no rotation outside $A$), while the fluid rotates anti-clockwisely when $\lambda>0$ in $A$(no rotation outside $A$). If $\omega=\lambda_1 I_{A_1}+\lambda_2 I_{A_2}$, $dist(A_1,A_2)>0$ and $\lambda_1\lambda_2<0$, then the fluid with vorticity $\omega=\lambda I_{A}$ is said to have rotation with opposite directions in two separated regions $A_1$ and $A_2$(no rotation outside $A_1\cup A_2$), which is the case we deal with in this paper.

For any given positive integer $k$, the vortex patch problem is that, for any critical point $(x_1,\cdots,x_k)$ of the Kirchhoff-Routh function,
 \begin{enumerate}[(1)]
\item can we construct a family of steady vortex patches, the vorticity set(i.e., the set $\{\omega\neq 0\}$) of which consisting of $k$ simply-connected sets shrinking to $x_i$ separately?

\item is there local uniqueness for these steady vortex patches?

\item are these steady vortex patches stable?
\end{enumerate}
As has been mentioned in Section 1, the answer to $(1)$ is yes for any $k$, and the answer to $(2),(3)$ is yes for $k=1$. In the present paper we deal with the case $k=2$ with $\kappa_1\kappa_2<0$, especially we prove that in this case $(2)$ implies $(3)$.
\section{Construction of Solutions}
In this section we construct steady double vortex patches with opposite rotation direction by solving a variational problem for the vorticity. The procedure is similar to \cite{T} where the existence of a single vortex patch was considered.

In the sequel, we always assume that $k=2$ and $\kappa_1>0,\kappa_2<0$ for the Kirchhoff-Routh function.

\subsection{Variational problem} We consider the following maximization problem.

Assume that $(\bar{x}_1,\bar{x}_2)\in D^{(2)}$ with $\bar{x}_1\neq\bar{x}_2$ is a strict local minimum point of $H$. By choosing $\delta>0$ sufficiently small we assume that $(\bar{x}_1,\bar{x}_2)$ is the unique minimum point of $H$ in $\overline{B_\delta(\bar{x}_1)}\times \overline{B_\delta(\bar{x}_2)}$ with $B_\delta(\bar{x}_i)\subset\subset D$ for $ i=1,2$, and $\overline{B_\delta(\bar{x}_1)}\cap \overline{B_\delta(\bar{x}_2)}=\varnothing$. For convenience we denote $B_\delta(\bar{x}_i)$ by $B_i$ for $i=1,2$. Define the vorticity class as follows:
\begin{equation}\label{212}
K_\lambda=\left\{\omega=\omega_1+\omega_2\,\mid\,\,\omega_i\in L^\infty(D),\,\,0\leq sgn(\kappa_i)\omega_i\leq\lambda, \int_D\omega_i=\kappa_i, supp(\omega_i)\subset B_i,\,i=1,2\right\}.
\end{equation}
Here and in the sequel $sgn(\kappa)$ is the sign of any real number $\kappa$ and $supp(f)$ is the support of any function $f$. Note that $K_\lambda$ is not empty if $\lambda>0$ is large enough. The kinetic energy of the vortex flow with vorticity $\omega$ is defined by
\begin{equation}\label{213}
E(\omega)=\frac{1}{2}\int_D\int_DG(x,y)\omega(x)\omega(y)dxdy.
\end{equation}
In the sequel we will maximize $E$ on $K_\lambda$ and show that the maximizer satisfies $\eqref{997*}$ provided $\lambda$ is sufficiently large.
\subsection{Existence of maximizer}
The existence of maximizer of $E$ on $K_\lambda$ can be easily proved by using standard maximization techniques.
\begin{theorem}
There exists $\omega^\lambda\in K_\lambda$ such that $E(\omega^\lambda)=\sup_{\omega\in K_\lambda}E(\omega)$.
\end{theorem}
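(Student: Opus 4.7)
The plan is to apply the direct method of the calculus of variations. First, observe that $M:=\sup_{K_\lambda}E$ is finite: every $\omega\in K_\lambda$ is supported in $\overline{B_1}\cup\overline{B_2}\subset\subset D$ with $\|\omega\|_{L^\infty}\le 2\lambda$, while the Green function $G$ is integrable on $D\times D$, so $|E(\omega)|$ is bounded uniformly on $K_\lambda$. Pick a maximizing sequence $\omega_n=\omega_{n,1}+\omega_{n,2}$ in $K_\lambda$. Each piece is uniformly bounded in $L^\infty(D)$, so after passing to a subsequence, Banach--Alaoglu yields weak* limits $\omega_{n,i}\to\omega_i^\lambda$ in $L^\infty(D)$ for $i=1,2$; set $\omega^\lambda:=\omega_1^\lambda+\omega_2^\lambda$.

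Next I would verify that $\omega^\lambda\in K_\lambda$. The pointwise constraint $0\le\mathrm{sgn}(\kappa_i)\omega_{n,i}\le\lambda$ cuts out a convex subset of $L^\infty(D)$ that is closed in the weak* topology, so this bound passes to the limit. The support condition is equivalent to $\omega_{n,i}\equiv 0$ a.e.\ on $D\setminus B_i$; testing weak* convergence against $I_{D\setminus B_i}\varphi$ for arbitrary $\varphi\in L^1(D)$ shows the same holds for $\omega_i^\lambda$. Testing against $I_D\in L^1(D)$ preserves each mass constraint $\int_D\omega_i^\lambda\,dx=\kappa_i$.

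The crucial step is sequential continuity of $E$ along the sequence. Set $\psi_n(x):=\int_D G(x,y)\omega_n(y)\,dy$ and define $\psi^\lambda$ analogously. Because $\|\omega_n\|_{L^\infty}\le 2\lambda$, the $L^p$-regularity property of the Green operator recalled in the excerpt bounds $\{\psi_n\}$ uniformly in $W^{2,p}(D)$ for every $p<\infty$, hence in $C^{1,\alpha}(\overline{D})$ by Sobolev embedding. Arzel\`a--Ascoli extracts a subsequence that converges uniformly on $\overline{D}$, and its limit must coincide with $\psi^\lambda$, since weak* convergence of $\omega_n$ tested against the $L^1$-kernel $G(x,\cdot)$ gives pointwise convergence $\psi_n(x)\to\psi^\lambda(x)$. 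Therefore $\|\psi_n-\psi^\lambda\|_{L^\infty}\to 0$. Splitting
\[E(\omega_n)=\tfrac12\int_D\omega_n(\psi_n-\psi^\lambda)\,dx+\tfrac12\int_D\omega_n\psi^\lambda\,dx,\]
the first term vanishes since $\{\omega_n\}$ is uniformly bounded in $L^1(D)$, while the second converges to $\tfrac12\int_D\omega^\lambda\psi^\lambda\,dx=E(\omega^\lambda)$ by weak* convergence against $\psi^\lambda\in L^1(D)$. Hence $E(\omega^\lambda)=M$, establishing the claim.

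I do not anticipate any serious obstacle; the only mildly delicate point is the sequential weak*-continuity of the quadratic form $E$, which is really a compactness statement for the Green operator $\omega\mapsto\int_D G(\cdot,y)\omega(y)\,dy$---precisely the smoothing property already highlighted in the excerpt. Everything else is a routine application of Banach--Alaoglu together with the fact that the linear constraints defining $K_\lambda$ are weak*-closed.
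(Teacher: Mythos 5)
Your proposal is correct and follows essentially the same route as the paper: the direct method with weak* compactness in $L^\infty(D)$, passage of the constraints to the limit, and weak*-continuity of $E$. In fact you are somewhat more careful than the paper on the last point (the paper simply asserts the convergence of the quadratic form, while you justify it via the $W^{2,p}$ bound and uniform convergence of the stream functions), and your treatment of the pointwise bounds via weak*-closedness of the order interval replaces the paper's equivalent level-set contradiction argument.
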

\begin{proof}
Firstly since $G(x,y)\in L^{1}(D\times D)$, there exists a constant $C>0$ independent of $\lambda$ such that for any $\omega\in K_\lambda$
\[E(\omega)=\frac{1}{2}\int_D\int_DG(x,y)\omega(x)\omega(y)dxdy\leq \frac{1}{2}\lambda^2\int_D\int_D|G(x,y)|dxdy\leq C\lambda^2,
\]
 which means $\sup_{\omega\in K_\lambda}E(\omega)<+\infty$. Let $\{\omega_n\}\subset K_\lambda$ be a maximizing sequence such that $\lim_{n\rightarrow+\infty}E(\omega_n)= \sup_{\omega\in K_\lambda}E(\omega)$. Since $K_\lambda$ is bounded in $\L^\infty(D)$, $K_\lambda$ is sequentially compact in the weak star topology in $L^\infty(D)$. Without loss of generality we assume that $\omega_n\rightarrow \omega^\lambda$ weakly star in $L^\infty(D)$ for some $\omega^\lambda\in L^\infty(D)$ as $n\rightarrow +\infty$.

We claim that $\omega^\lambda\in K_\lambda$. In fact, $\omega_n\rightarrow \omega^\lambda$ weakly star in $L^\infty(D)$ means
\[
\lim_{n\rightarrow +\infty}\int_D\omega_n\phi=\int_D\omega^\lambda\phi,\,\, \phi\in L^1(D).
\]
Then for any $\phi\in C_0^\infty(D\setminus \cup_{i=1}^2 B_i)$,
\[
\lim_{n\rightarrow +\infty}\int_D\omega_n\phi=\int_D\omega^\lambda\phi=0,
\]
which means $\omega^\lambda\equiv0$ a.e. in $D\setminus \cup_{i=1}^2B_i$. Thus $supp(\omega^\lambda)\subset\cup_{i=1}^2 B_i$. Define $\omega_i^\lambda=\omega^\lambda I_{B_i}$, then we have $\omega^\lambda=\sum_{i=1}^{2}\omega^\lambda_i$. Now by choosing $\phi=I_{B_i}$, we have
\[
\int_D\omega_i^\lambda=\int_D\omega^\lambda\phi=\lim_{n\rightarrow +\infty}\int_D\omega_n\phi=\lim_{n\rightarrow +\infty}\int_{B_i}\omega_n=\kappa_i.
\]
Now we prove that $0\leq sgn(\kappa_i)\omega_i^\lambda\leq\lambda$ for any fixed $i=1,2$. Without loss of generality we assume $\kappa_i>0$. We prove $\omega_i^\lambda\leq\lambda$ first. Suppose that $|\{\omega^\lambda_i>\lambda\}|>0$, then there exist $\varepsilon_0,\varepsilon_1>0$ such that $|\{\omega^\lambda_i>\lambda+\varepsilon_0\}|>\varepsilon_1$. Denote $B_i^*=\{\omega^\lambda_i>\lambda+\varepsilon_0\}\subset B_\delta(\bar{x}_i)$, then for $\phi=I_{B_i^*}$ we have
\[\int_{B_i^*}(\omega^\lambda_i-\omega_n)\geq\varepsilon_0|B_i^*|\geq\varepsilon_0\varepsilon_1.\]
On the other hand, by weak star convergence
\[\lim_{n\rightarrow +\infty}\int_{B_i^*}(\omega^\lambda_i-\omega_n)=\lim_{n\rightarrow +\infty}\int_D(\omega^\lambda_i-\omega_n)\phi=0,\]
which is a contradiction, so $\omega^\lambda_i\leq \lambda$. Similar argument gives $\omega^\lambda_i\geq 0$. Thus the claim is proved.

Finally by weak star convergence we have
\[\lim_{n\rightarrow +\infty}\frac{1}{2}\int_D\int_DG(x,y)\omega_n(x)\omega_n(y)dxdy=\frac{1}{2}\int_D\int_DG(x,y)\omega^\lambda(x)\omega^\lambda(y)dxdy\]
 which means $E(\omega^\lambda)=\lim_{n\rightarrow +\infty} E(\omega_n)=\sup_{\omega\in K_\lambda}E(\omega)$.
\end{proof}

\subsection{Profile of $\omega^\lambda$} Now we show that $\omega^\lambda$ has a special form.
\begin{lemma}
$\omega^\lambda$ has the form $\omega^\lambda=\sum_{i=1}^2\omega^\lambda_i$, where $\omega^\lambda_i=sgn(\kappa_i)\lambda I_{\Omega_i}$,and $\Omega_i$ is defined by
\[\Omega_i={\{sgn(\kappa_i)\psi^\lambda>\mu^\lambda_i\}\cap B_i}\]
for real numbers $\mu^\lambda_i$ depending on $\omega^\lambda$, $i=1,2$. Here $\psi^\lambda(x)=\int_DG(x,y)\omega^\lambda(y)dy$.
\end{lemma}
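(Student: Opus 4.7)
The plan is to derive a first-order optimality condition and then apply the bathtub principle separately on the two balls $B_1$ and $B_2$. Since $K_\lambda$ is convex, for every $\tilde\omega\in K_\lambda$ and every $s\in[0,1]$ the convex combination $(1-s)\omega^\lambda+s\tilde\omega$ lies in $K_\lambda$. Using the symmetry of $G$,
\[
\frac{d}{ds}E\bigl((1-s)\omega^\lambda+s\tilde\omega\bigr)\Big|_{s=0^+}=\int_D\psi^\lambda(\tilde\omega-\omega^\lambda)\,dx,
\]
and maximality of $\omega^\lambda$ forces this quantity to be $\leq 0$ for every $\tilde\omega\in K_\lambda$. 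Hence $\omega^\lambda$ maximizes the linear functional $\omega\mapsto\int_D\psi^\lambda\omega\,dx$ over $K_\lambda$.

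Because elements of $K_\lambda$ split as $\omega=\omega_1+\omega_2$ with $supp(\omega_i)\subset B_i$ and the pointwise/integral constraints on $\omega_1,\omega_2$ are independent, this linear maximization decouples into two problems: $\omega_i^\lambda$ maximizes $\int_{B_i}\psi^\lambda\omega_i\,dx$ subject to $0\leq sgn(\kappa_i)\omega_i\leq\lambda$ on $B_i$ and $\int_{B_i}\omega_i=\kappa_i$. For $\kappa_i>0$ the classical bathtub principle forces the maximizer to concentrate on the super-level sets of $\psi^\lambda$; for $\kappa_i<0$ the same principle applied to $-\omega_i$ puts its mass on the sub-level sets. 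Both cases are summarized uniformly as
\[
\omega_i^\lambda=sgn(\kappa_i)\lambda I_{\Omega_i},\qquad \Omega_i=\{sgn(\kappa_i)\psi^\lambda>\mu_i^\lambda\}\cap B_i,
\]
with the threshold $\mu_i^\lambda$ pinned down by the integral constraint $\int\omega_i^\lambda=\kappa_i$.

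The first-order step and the bathtub argument are routine; the only delicate point is the flat piece $L_i=\{sgn(\kappa_i)\psi^\lambda=\mu_i^\lambda\}\cap B_i$, on which the bathtub principle alone does not dictate the value of $\omega_i^\lambda$. To dispose of this ambiguity I would invoke the Stampacchia lemma: because $\psi^\lambda\in W^{2,p}(D)$ for every $p<\infty$, one has $\Delta\psi^\lambda=0$ almost everywhere on every level set of $\psi^\lambda$; combined with $-\Delta\psi^\lambda=\omega^\lambda$, this forces $\omega_i^\lambda=0$ a.e.\ on $L_i$. Consequently no mass lives on $L_i$, the threshold $\mu_i^\lambda$ is determined (up to the quantization $\lambda|\Omega_i|=|\kappa_i|$), and the clean bang-bang form claimed in the lemma follows. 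This level-set step is the main obstacle, and it is precisely here that the $W^{2,p}$ regularity inherited from $\omega^\lambda\in L^\infty$ via standard elliptic theory is the essential ingredient.
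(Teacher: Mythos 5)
Your proposal is correct and follows essentially the same route as the paper: a first-order optimality condition that reduces the quadratic maximization to a linear one for $\omega\mapsto\int_D\psi^\lambda\omega$, a rearrangement/exchange argument on each ball (which you package as the bathtub principle, while the paper carries it out by hand with explicit perturbations $z_0-z_1$ and uses connectedness of $\overline{B_i}$ plus continuity of $\psi^\lambda$ to pin down a single level $\mu^\lambda_i$), and the Stampacchia-type observation that $-\Delta\psi^\lambda=0$ a.e.\ on the level set $\{sgn(\kappa_i)\psi^\lambda=\mu^\lambda_i\}$ to dispose of the flat piece. No gaps.
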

\begin{proof}
Without loss of generality we only prove the case $i=1$, similar argument applies to the case $i=2$. It suffices to show that $\omega^\lambda_1=\lambda I_{\{\psi^\lambda>\mu^\lambda_1\}\cap B_1}$ for some real number $\mu^\lambda_1$.

Define a family of test functions $\omega_{(s)}(x)=\omega^\lambda+s[z_0(x)-z_1(x)]$, $s>0$, where $z_0,z_1$ satisfies
\begin{equation}\label{777}
\begin{cases}
z_0,z_1\in L^\infty(D),\,\, z_0,z_1\geq 0,\,\, \int_Dz_0=\int_D z_1,
 \\supp(z_0),supp(z_1)\subset B_1,
 \\z_0=0\text{\,\,\,\,\,\,} in\text{\,\,} D\verb|\|\{\omega^\lambda_1\leq\lambda-\varepsilon\},
 \\z_1=0\text{\,\,\,\,\,\,} in\text{\,\,} D\verb|\|\{\omega^\lambda_1\geq\varepsilon\},
\end{cases}
\end{equation}
where $\varepsilon$ is any small positive number.

Note that for fixed $z_0,z_1$ and $\varepsilon$, $\omega_{(s)}\in K_\lambda$ provided $s$ is sufficiently small. Since $\omega_{(0)}=\omega^\lambda$ is a maximizer, we have
\begin{equation}
0\geq\frac{dE(\omega_{(s)})}{ds}|_{s=0^+}=\int_Dz_0\psi^\lambda-\int_Dz_1\psi^\lambda,
\end{equation}
where $\psi^\lambda(x)=\int_DG(x,y)\omega^\lambda(y)dy.$
That is $\int_Dz_0\psi^\lambda\leq\int_Dz_1\psi^\lambda$ for any $z_0,z_1$ satisfying \eqref{777}, which gives
\begin{equation}\label{778}
\sup_{\{\omega^\lambda<\lambda\}\cap B_1}\psi^\lambda\leq\inf_{\{\omega^\lambda>0\}\cap B_1}\psi^\lambda.\end{equation}
Since $\overline{B_1}$ is connected and $\overline{\{\omega^\lambda<\lambda\}\cap B_1}\cup\overline{\{\omega^\lambda>0\}\cap B_1}=\overline{B_1}$, we have $\overline{\{\omega^\lambda<\lambda\}\cap B_1}\cap\overline{\{\omega^\lambda>0\}\cap B_1}\neq\varnothing$, then by the continuity of $\psi^\lambda$($\psi^\lambda$ satisfies $-\Delta\psi^\lambda=\omega^\lambda\in L^\infty(D)$, by the regularity theory of elliptic equations $\psi^\lambda\in C^{1,\alpha}(D)$ for any $0<\alpha<1$),
\begin{equation}\label{779}
\sup_{\{\omega^\lambda<\lambda\}\cap B_1}\psi^\lambda\geq\inf_{\{\omega^\lambda>0\}\cap B_1}\psi^\lambda.\end{equation}
\eqref{778} and \eqref{779} together give
\begin{equation}
\sup_{\{\omega^\lambda<\lambda\}\cap B_1}\psi^\lambda=\inf_{\{\omega^\lambda>0\}\cap B_1}\psi^\lambda,
\end{equation}
from we can define $\mu^\lambda_1\triangleq\sup_{\{\omega^\lambda<\lambda\}\cap B_1}\psi^\lambda=\inf_{\{\omega^\lambda>0\}\cap B_1}\psi^\lambda$. Then obviously
\begin{equation}
\begin{cases}
\omega^\lambda=0\text{\,\,\,\,\,\,$a.e.$\,} in\text{\,\,}\{\psi^\lambda_1<\mu^\lambda_1\}\cap B_1,
 \\ \omega^\lambda=\lambda\text{\,\,\,\,\,\,$a.e.$\,} in\text{\,\,}\{\psi^\lambda_1>\mu^\lambda_1\}\cap B_1,
\end{cases}
\end{equation}
It remains to show that $\omega^\lambda=0$ a.e. on $\{\psi^\lambda_1=\mu^\lambda_1\}\cap B_1$. In fact, on $\{\psi^\lambda_1=\mu^\lambda_1\}\cap B_1$, $\psi^\lambda$ is constant, so we have $\nabla\psi^\lambda=0$,  then $\omega^\lambda=-\Delta \psi^\lambda =0$.
Altogether, $\omega^\lambda_1$ satisfies
\begin{equation}
\begin{cases}
\omega^\lambda_1=0\text{\,\,\,\,\,\,$a.e.$\,} in\text{\,\,}\{\psi^\lambda_1\leq\mu^\lambda_1\}\cap B_1,
 \\ \omega^\lambda_1=\lambda\text{\,\,\,\,\,\,$a.e.$\,} in\text{\,\,}\{\psi^\lambda_1>\mu^\lambda_1\}\cap B_1,
\end{cases}
\end{equation}
which completes the proof.

\end{proof}
\begin{remark}\label{670}
It is easy to verify that $\mu^\lambda_1>0,\mu^\lambda_2>0$ if $\delta$ is sufficiently small and $\lambda$ is sufficiently large. In fact, for $x\in \partial B_\delta(\bar{x}_1)$,
\begin{equation}\label{334}
\begin{split}
\psi^\lambda(x)&=\int_DG(x,y)\omega^\lambda(y)dy\\
=&\int_{B_\delta(\bar{x}_1)}G(x,y)\omega^\lambda_1(y)dy+\int_{B_\delta(\bar{x}_2)}G(x,y)\omega^\lambda_2(y)dy\\
=&\int_{B_\delta(\bar{x}_1)}-\frac{1}{2\pi}\ln|x-y|\omega^\lambda_1(y)dy-\int_{B_\delta(\bar{x}_1)}h(x,y)\omega^\lambda_1(y)dy+\int_{B_\delta(\bar{x}_2)}G(x,y)\omega^\lambda_2(y)dy.\\
\end{split}
\end{equation}
Since $|x-y|\leq \delta$ for $y\in B_\delta(\bar{x}_1)$, the first term in \eqref{334}
\begin{equation}
\int_{B_\delta(\bar{x}_1)}-\frac{1}{2\pi}\ln|x-y|\omega^\lambda_1(y)dy\geq-\frac{\kappa_1}{2\pi}\ln\delta.
\end{equation}
As for the second term and the third term, we notice that $dist(B_\delta(\bar{x}_1),\partial D)>0$ and $dist(B_\delta(\bar{x}_1),B_\delta(\bar{x}_2))>0$, so there exists some $C>0$ independent of $\delta$ and $\lambda$ such that
\begin{equation}
-\int_{B_\delta(\bar{x}_1)}h(x,y)\omega^\lambda_1(y)dy+\int_{B_\delta(\bar{x}_2)}G(x,y)\omega^\lambda_2(y)dy\geq-C.
\end{equation}
So we have
\begin{equation}
\psi^\lambda(x)\geq-\frac{\kappa_1}{2\pi}\ln\delta-C
\end{equation}
 for any $x\in \partial B_\delta(\bar{x}_1)$.  By choosing $\delta$ sufficiently small(necessarily $\lambda$ sufficiently large to ensure $K_\lambda\neq \varnothing$), we have $\psi^\lambda>0$ on $\partial B_\delta(\bar{x}_1)$, thus $\psi^\lambda>0$ in $B_\delta(\bar{x}_1)$ by maximum principle. On the other hand, we notice that $|\Omega_1|\rightarrow0$ as $\lambda\rightarrow+\infty$, so $\partial\Omega_1\cap B_1\neq \varnothing$ for sufficiently large $\lambda$. Thus by choosing $x^*\in\partial\Omega_1\cap B_1$ we have $\mu^\lambda_1=\psi^\lambda(x^*)>0$.  Similarly $\mu^\lambda_2>0$ if $\delta$ is sufficiently small.

 In the sequel, we choose $\delta$ small enough such that $\mu^\lambda_1>0,\mu^\lambda_2>0$.
\end{remark}

Now we have proved the existence of $\omega^\lambda$ as the maximizer of the variational problem, however, $\omega^\lambda$ does not necessarily satisfy $\eqref{997*}$. To show that $\omega^\lambda$ is a steady vortex patch, we need to prove that the support of $\omega^\lambda_i$ is away from $\partial B_i$. This is achieved by analyzing the asymptotic behavior of $\omega^\lambda$ as $\lambda\rightarrow +\infty$ in the next subsection.

\subsection{Asymptotic estimates}
In this subsection, we prove that the support of $\omega^\lambda_i$ shrinks to $\bar{x}_i$, that is, the diameter of $supp(\omega^\lambda_i)$ goes to zero and the center of $supp(\omega^\lambda_i)$ converges to $\bar{x}_i$ as $\lambda\rightarrow +\infty$.

First we estimate the lower bound of the total energy.
\begin{lemma}\label{566}
$E(\omega^\lambda)\geq-\frac{1}{4\pi}\sum_{i=1}^{2}\kappa_i^2\ln\varepsilon_i-C$, where $\varepsilon_i$ satisfies $\lambda|B_{\varepsilon_i}(\bar{x}_i)|=|\kappa_i|$ $($or equivalently $\varepsilon_i=\sqrt{\frac{|\kappa_i|}{\lambda\pi}})$.
\end{lemma}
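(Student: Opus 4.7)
The plan is to produce an explicit test vorticity inside $K_\lambda$ whose energy has the desired lower bound, and then invoke the maximizing property $E(\omega^\lambda) \geq E(\tilde{\omega})$. The natural candidate is
\[
\tilde{\omega} \;=\; \sum_{i=1}^{2}\operatorname{sgn}(\kappa_i)\lambda\, I_{B_{\varepsilon_i}(\bar{x}_i)},
\]
with $\varepsilon_i = \sqrt{|\kappa_i|/(\lambda\pi)}$ chosen precisely so that $\operatorname{sgn}(\kappa_i)\int_D \tilde{\omega}\, I_{B_i} = \lambda\,\pi\varepsilon_i^{2} = |\kappa_i|$. For $\lambda$ large the radii $\varepsilon_i$ are less than $\delta$, the supports are disjoint and contained in $B_i$, and the pointwise bounds in \eqref{212} are automatic, so $\tilde{\omega}\in K_\lambda$.

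Next I would split
\[
E(\tilde{\omega}) \;=\; \sum_{i=1}^{2}\tfrac{1}{2}\!\int_D\!\!\int_D G(x,y)\tilde{\omega}_i(x)\tilde{\omega}_i(y)\,dx\,dy \;+\; \int_D\!\!\int_D G(x,y)\tilde{\omega}_1(x)\tilde{\omega}_2(y)\,dx\,dy,
\]
where $\tilde{\omega}_i = \operatorname{sgn}(\kappa_i)\lambda\, I_{B_{\varepsilon_i}(\bar x_i)}$. For the cross term, since $\operatorname{dist}(B_1,B_2)>0$, the Green function $G$ is bounded on $\overline{B_1}\times\overline{B_2}$; together with $\int |\tilde{\omega}_i| = |\kappa_i|$ this gives a contribution bounded by a constant independent of $\lambda$.

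For the diagonal terms I would use the decomposition $G(x,y)=\frac{1}{2\pi}\ln\frac{1}{|x-y|}-h(x,y)$. Since $h$ is continuous on the compact set $\overline{B_i}\times\overline{B_i}$, the $h$-part contributes $O(\lambda^{2}\varepsilon_i^{4}) = O(\kappa_i^{2})$, bounded. For the logarithmic part, the crude bound $|x-y|\leq 2\varepsilon_i$ on $B_{\varepsilon_i}(\bar x_i)\times B_{\varepsilon_i}(\bar x_i)$ yields
\[
\tfrac{\lambda^{2}}{2}\!\int_{B_{\varepsilon_i}}\!\!\!\int_{B_{\varepsilon_i}}\!\!\tfrac{1}{2\pi}\ln\tfrac{1}{|x-y|}\,dx\,dy \;\geq\; \tfrac{\lambda^{2}}{4\pi}\,\pi^{2}\varepsilon_i^{4}\bigl(-\ln\varepsilon_i - \ln 2\bigr) \;=\; -\tfrac{\kappa_i^{2}}{4\pi}\ln\varepsilon_i - C,
\]
using $\lambda^{2}\pi^{2}\varepsilon_i^{4} = \kappa_i^{2}$. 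Summing over $i=1,2$ and collecting all bounded contributions into a single constant $C$ produces the claimed lower bound for $E(\tilde{\omega})$, and hence for $E(\omega^\lambda)$.

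There is no real obstacle here; the proof is essentially a careful bookkeeping of scales. The only subtlety worth flagging is the choice of the scaling relation $\lambda|B_{\varepsilon_i}| = |\kappa_i|$, which balances the two factors of $\lambda$ in $\frac{1}{2}\lambda^{2}|B_{\varepsilon_i}|^{2}$ against the prefactor $\pi^{2}\varepsilon_i^{4}$ of the log integral so as to reproduce $-\frac{\kappa_i^{2}}{4\pi}\ln\varepsilon_i$ as the leading term; any other scaling would either violate the mass constraint $\int\tilde{\omega}_i=\kappa_i$ or fail to capture the correct logarithmic divergence.
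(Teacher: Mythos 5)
Your proof is correct and follows essentially the same route as the paper: the same test function $\bar\omega_i=\operatorname{sgn}(\kappa_i)\lambda I_{B_{\varepsilon_i}(\bar x_i)}$, the same diagonal/cross splitting, the crude bound $|x-y|\le 2\varepsilon_i$ for the logarithmic term, and boundedness of the $h$-part and the cross term. No gaps to report.
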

\begin{proof}
The result is proved by choosing a proper test function. Indeed, take the test function $\bar{\omega}=\sum_{i=1}^{2}\bar{\omega}_i$, where $\bar{\omega}_i=sgn(\kappa_i)\lambda I_{B_{\varepsilon_i}(\bar{x}_i)}$, then it is easy to check that $\bar{\omega}\in K_\lambda$, so we have the lower bound for $E(\omega^\lambda)$:
\[E(\omega^\lambda)\geq E(\bar{\omega}).\]
Now we calculate $E(\bar{\omega})$,
\begin{equation}\label{300}
\begin{split}
  E(\bar{\omega})=&\frac{1}{2}\int_D\int_D G(x,y)\bar{\omega}(x)\bar{\omega}(y)dxdy \\
 =&\frac{1}{2}\sum_{1\leq i,j\leq 2}\int_DG(x,y)\bar{\omega}_i(x)\bar{\omega}_j(y)dxdy\\
 =&\frac{1}{2}\sum_{i=1}^{2}\int_D\int_DG(x,y)\bar{\omega}_i(x)\bar{\omega}_i(y)dxdy+\int_D\int_DG(x,y)\bar{\omega}_1(x)\bar{\omega}_2(y)dxdy. \\
\end{split}
\end{equation}
For fixed $i$, $\int_D\int_DG(x,y)\bar{\omega}_i(x)\bar{\omega}_i(y)dxdy$ has the following estimate:
\begin{equation}\label{301}
\begin{split}
  \int_D\int_DG(x,y)\bar{\omega}_i(x)\bar{\omega}_i(y)dxdy =& \lambda^2\int_D\int_DG(x,y)I_{B_{{\varepsilon}_i}}(\bar{x}_i)(x)I_{B_{{\varepsilon}_i}}(\bar{x}_i)(y)dxdy \\
=&\lambda^2\int_{B_{{\varepsilon}_i}(\bar{x}_i)}\int_{B_{{\varepsilon}_i}(\bar{x}_i)}G(x,y)dxdy\\
=&\frac{\lambda^2}{2\pi}\int_{B_{{\varepsilon}_i}(\bar{x}_i)}\int_{B_{{\varepsilon}_i}(\bar{x}_i)}\ln\frac{1}{|x-y|}dxdy-
\lambda^2\int_{B_{{\varepsilon}_i}(\bar{x}_i)}\int_{B_{{\varepsilon}_i}(\bar{x}_i)}h(x,y)dxdy\\
=&\frac{\lambda^2}{2\pi}\int_{B_{{\varepsilon}_i}(\bar{x}_i)}\int_{B_{{\varepsilon}_i}(\bar{x}_i)}\ln\frac{1}{|x-y|}dxdy-\kappa_i^2h(\bar{x}_i,\bar{x}_i)+o(1)\\
\geq&\frac{\lambda^2}{2\pi}\int_{B_{{\varepsilon}_i}(\bar{x}_i)}\int_{B_{{\varepsilon}_i}(\bar{x}_i)}\ln\frac{1}{2\varepsilon_i}dxdy-\kappa_i^2h(\bar{x}_i,\bar{x}_i)+o(1)\\
\geq &-\frac{\kappa_i^2}{2\pi}\ln\varepsilon_i-C.
\end{split}
\end{equation}
Here as before $C$ denotes various constants independent of $\lambda$, and $o(1)$ denotes various quantities tending to 0 as $\lambda\rightarrow+\infty$.

On the other hand, the integral $\int_D\int_DG(x,y)\bar{\omega}_1(x)\bar{\omega}_2(y)dxdy$ is uniformly bounded:
\begin{equation}\label{302}
\begin{split}
&\left|\int_D\int_DG(x,y)\bar{\omega}_1(x)\bar{\omega}_2(y)dxdy\right|\\=&\left|\lambda^2\int_{B_{{\varepsilon}_1}(\bar{x}_1)}\int_{B_{{\varepsilon}_2}(\bar{x}_2)}G(x,y)dxdy\right|\\
\leq&\left|\frac{\lambda^2}{2\pi}\int_{B_{{\varepsilon}_i}(\bar{x}_i)}\int_{B_{{\varepsilon}_j}(\bar{x}_j)}\ln\frac{1}{|x-y|}dxdy\right|+
\left|\lambda^2\int_{B_{{\varepsilon}_1}(\bar{x}_1)}\int_{B_{{\varepsilon}_2}(\bar{x}_2)}h(x,y)dxdy\right|\\
\leq&\lambda^2\int_{B_{{\varepsilon}_1}(\bar{x}_1)}\int_{B_{{\varepsilon}_2}(\bar{x}_2)}Cdxdy\\
\leq&C,
\end{split}
\end{equation}
where we use the fact that $dist(B_{\varepsilon_1}(\bar{x}_1),B_{\varepsilon_2}(\bar{x}_2)$ has a positive lower bound independent of $\lambda$.

Combining $\eqref{300}$, $\eqref{301}$ and $\eqref{302}$ we get the desired result.

\end{proof}

Now we divide the total energy $E(\omega^\lambda)$ into two parts:
\begin{equation}\label{303}
\begin{split}
E(\omega^\lambda)=&\frac{1}{2}\int_D\psi^\lambda\omega^\lambda
=\frac{1}{2}\sum_{i=1}^2\int_D\psi^\lambda\omega^\lambda_i\\
=&\frac{1}{2}\sum_{i=1}^2\int_D(\psi^\lambda-sgn(\kappa_i)\mu_i^\lambda)\omega^\lambda_i
+\frac{1}{2}\sum_{i=1}^2\int_D\mu_i^\lambda sgn(\kappa_i)\omega^\lambda_i\\
=&\frac{1}{2}\sum_{i=1}^2\int_D(\psi^\lambda-sgn(\kappa_i)\mu_i^\lambda)\omega^\lambda_i+\frac{1}{2}\sum_{i=1}^2\mu_i^\lambda|\kappa_i|\\
\end{split}
\end{equation}
The first term in the above formula is called the energy of the vortex core, written as
\begin{equation}
T(\omega^\lambda)\triangleq \frac{1}{2}\sum_{i=1}^2\int_D(\psi^\lambda-sgn(\kappa_i)\mu_i^\lambda)\omega^\lambda_i.
\end{equation}
We have the following estimate for $T(\omega^\lambda)$.
\begin{lemma}\label{422}
$T(\omega^\lambda)\leq C$.
\end{lemma}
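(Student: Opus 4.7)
The plan is to bound each of the two nonnegative summands
\[T_i \;=\; \frac{\lambda}{2}\int_{\Omega_i}(\mathrm{sgn}(\kappa_i)\psi^\lambda - \mu_i^\lambda)\,dx\]
separately by a constant; by symmetry I focus on $i=1$. Set $\phi := (\psi^\lambda - \mu_1^\lambda)^+$. Since $\mu_1^\lambda>0$ by the preceding remark and $\psi^\lambda=0$ on $\partial D$, we have $\phi\in H_0^1(D)$. The symmetric argument for the negative patch shows $\psi^\lambda<0$ throughout $\overline{B_2}$, so $\phi\equiv 0$ on $\overline{B_2}$ and hence $\omega_2^\lambda\phi\equiv 0$. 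Integration by parts then yields the key identity
\[\|\nabla\phi\|_{L^2(D)}^2 \;=\; \int_D\omega^\lambda\phi\,dx \;=\; \lambda\int_{\Omega_1}\phi\,dx \;=\; 2T_1.\]

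On the open set $A:=\{\psi^\lambda>\mu_1^\lambda\}$, $\phi$ solves $-\Delta\phi=\lambda I_{\Omega_1}$ with zero boundary values. Talenti's symmetrization theorem then gives $\phi^*\leq v$ on $A^\ast=B_R(0)$ with $R=\sqrt{|A|/\pi}$, where $v$ is the explicit radial solution of $-\Delta v=\lambda I_{B_{\varepsilon_1}(0)}$ on $B_R$ vanishing on $\partial B_R$. Combined with the Hardy--Littlewood rearrangement inequality one obtains
\[2T_1 \;=\; \lambda\int_{\Omega_1}\phi\,dx \;\leq\; \lambda\int_{B_{\varepsilon_1}(0)}v\,dx \;=\; \tfrac{\kappa_1^2}{8\pi}+\tfrac{\kappa_1^2}{2\pi}\ln(R/\varepsilon_1).\]

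The main obstacle is to show $R=O(\varepsilon_1)$, equivalently $|A|=O(|\kappa_1|/\lambda)$. Since $A\cap\overline{B_2}=\emptyset$ and $A\cap B_1=\Omega_1$, the excess $|A\setminus B_1|$ lies in $D\setminus\overline{B_1\cup B_2}$, where $\psi^\lambda$ is harmonic; by the maximum principle the task reduces to proving $\sup_{\partial B_1}\psi^\lambda\leq\mu_1^\lambda$, i.e.\ that $\Omega_1\subset\subset B_1$ for $\lambda$ large. I would try to establish this by combining the bathtub-principle first-order optimality of $\omega^\lambda$ (applied with the admissible test function $\lambda I_{B_{\varepsilon_1}(\bar x_1)}$, which yields $\int_{B_{\varepsilon_1}(\bar x_1)}\psi^\lambda\leq\int_{\Omega_1}\psi^\lambda$) with a Riesz-rearrangement upper bound on $u_1(x)=\lambda\int_{\Omega_1}G(x,y)\,dy$ at boundary points $x\in\partial B_1$. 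Should a clean direct argument prove elusive, one could instead bootstrap from the weaker but unconditional bound $T_1\leq C\log\lambda$ that the Talenti inequality already provides.
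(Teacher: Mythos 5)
Your setup is fine as far as it goes: the identity $\|\nabla\phi\|_{L^2(D)}^2=\int_D\omega^\lambda\phi=\lambda\int_{\Omega_1}\phi$ (using $\psi^\lambda<0$ on $\overline{B_2}$ and $\mu_1^\lambda>0$) is correct, and the Talenti/Hardy--Littlewood estimate $\lambda\int_{\Omega_1}\phi\le\frac{\kappa_1^2}{8\pi}+\frac{\kappa_1^2}{2\pi}\ln(R/\varepsilon_1)$ is also correct. But the proof does not close, and you have correctly identified where: everything hinges on $R=O(\varepsilon_1)$, i.e.\ $|\{\psi^\lambda>\mu_1^\lambda\}|\le C/\lambda$, and this is not established. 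Your reduction of that bound to $\sup_{\partial B_1}\psi^\lambda\le\mu_1^\lambda$ (equivalently $\Omega_1\subset\subset B_1$) asks for precisely the confinement statement that, in the paper's logic, is a \emph{downstream consequence} of this lemma: $T(\omega^\lambda)\le C$ feeds the lower bound on $\mu_i^\lambda$, which feeds the diameter estimate, which finally yields $\mathrm{supp}(\omega_i^\lambda)$ away from $\partial B_i$. At this stage one only knows $\mu_1^\lambda>0$, with no quantitative lower bound, while a priori $\sup_{\partial B_1}\psi^\lambda$ could be of order $\ln\lambda$ (if $\omega_1^\lambda$ concentrated near $\partial B_1$), i.e.\ of the same order as $\mu_1^\lambda$; so the inequality you need is genuinely nontrivial and your sketched route (first-order optimality against $\lambda I_{B_{\varepsilon_1}(\bar x_1)}$ plus a Riesz bound on $u_1$ at $\partial B_1$) is not a proof of it. The fallback $T_1\le C\log\lambda$ is strictly weaker than the lemma and no bootstrap mechanism is given; moreover a $\log\lambda$ loss here is fatal downstream, since the error term in the ensuing bound for $\sum_i|\kappa_i|\mu_i^\lambda$ would then be of the same order $\ln\lambda$ as the main term $-\frac{1}{2\pi}\sum_i\kappa_i^2\ln\varepsilon_i$, and the diameter estimate of Lemma \ref{351} would no longer follow.

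For contrast, the paper's proof is designed exactly to avoid controlling the level set $\{\psi^\lambda>\mu_1^\lambda\}$ outside $B_1$. Writing $\zeta_1=\psi^\lambda-\mu_1^\lambda$, it applies Cauchy--Schwarz only over $\Omega_1$ (whose measure is exactly $|\kappa_1|/\lambda$) together with the two-dimensional embedding $W^{1,1}(B_1)\hookrightarrow L^2(B_1)$ to get $T_1\le o(1)\,T_1+C\bigl(\int_{\Omega_1}|\nabla\zeta_1^+|^2\bigr)^{1/2}$, and then the same energy identity you derived, $\int_{\{\zeta_1>0\}}|\nabla\zeta_1|^2\le T_1$, to conclude $T_1\le C\,T_1^{1/2}$ and hence $T_1\le C$. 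If you wish to salvage the symmetrization route, you would need an independent a priori bound on $|\{\psi^\lambda>\mu_1^\lambda\}|$, which does not appear to be easier than the lemma itself.
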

\begin{proof}
Denote $\zeta_i=\psi^\lambda-sgn(\kappa_i)\mu_i^\lambda, T_1=\int_D\zeta_1\omega^\lambda_1, T_2=\int_D\zeta_2\omega^\lambda_2$, then $T(\omega^\lambda)=\frac{1}{2}(T_1+T_2)$. It suffices to prove that $T_1, T_2$ are both uniformly bounded from above. We only consider $T_1$, and the same argument applies to $T_2$.

On the one hand,
\begin{equation}\label{677}
\begin{split}
T_1=&\int_D\zeta_1\omega^\lambda_1
  =\lambda\int_{\Omega_1}\zeta_1\\
  \leq&\lambda|\Omega_1|^{\frac{1}{2}}(\int_{\Omega_1}\zeta^2_1)^\frac{1}{2}\\
   \leq&\lambda|\Omega_1|^{\frac{1}{2}}(\int_{B_1}\zeta^{+2}_1)^\frac{1}{2}\\
  \leq&C\lambda|\Omega_1|^{\frac{1}{2}}(\int_{B_1}\zeta^+_1+\int_{B_1}|\nabla\zeta^+_1|)\\
  =&C\lambda|\Omega_1|^{\frac{1}{2}}\int_{B_1}\zeta^+_1+   C\lambda|\Omega_1|^{\frac{1}{2}}\int_{B_1}|\nabla\zeta^+_1|\\
  =&C|\Omega_1|^{\frac{1}{2}}T_1  +   C\lambda|\Omega_1|^{\frac{1}{2}}\int_{B_1}|\nabla\zeta^+_1|\\
  \leq&C|\Omega_1|^{\frac{1}{2}}T_1  +   C(\int_{\Omega_1}|\nabla\zeta^+_1|^2)^\frac{1}{2}\\
  =&o(1)T_1  +   C(\int_{\Omega_1}|\nabla\zeta^+_1|^2)^\frac{1}{2}
\end{split}
\end{equation}
as $\lambda\rightarrow +\infty$. Here we use the Sobolev embedding $W^{1,1}(B_1)\hookrightarrow L^2(B_1)$.
From \eqref{677} we have
\begin{equation}\label{697}
T_1\leq  C(\int_{\Omega_1}|\nabla\zeta^+_1|^2)^\frac{1}{2}.
\end{equation}
One the other hand,
\begin{equation}\label{986}
\begin{split}
T_1
=&\int_D\zeta_1\omega^\lambda_1
=\int_D\zeta^+_1\omega^\lambda_1
\geq\int_D\zeta_1^+\omega^\lambda
=\int_D\zeta_1^+(-\Delta \zeta_1)
=\int_{\{\zeta_1>0\}}|\nabla\zeta_1|^2
\geq\int_{\Omega_1}|\nabla\zeta^+_1|^2.
\end{split}
\end{equation}
Here $\omega^\lambda_1\geq\omega^\lambda$ in $D$ since $\kappa_2<0$, and $\int_D\zeta_1^+(-\Delta \zeta_1)=\int_{\{\zeta_1>0\}}|\nabla\zeta_1|^2$ since $\{\zeta_1>0\}\subset\subset D$ by Remark \ref{670}.

\eqref{697} and \eqref{986} together give $T_1\leq C$, which is the desired result.

\end{proof}
By combining Lemma \ref{566}, \eqref{303} and Lemma \ref{422}, we obtain the following lower bound of $\sum_{i=1}^2|\kappa_i|\mu^\lambda_i$.
\begin{lemma}
$\sum_{i=1}^2|\kappa_i|\mu^\lambda_i\geq-\frac{1}{2\pi}\sum_{i=1}^{2}\kappa_i^2\ln\varepsilon_i-C.$
\end{lemma}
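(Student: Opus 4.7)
The plan is to combine the three preceding ingredients in a single line of arithmetic: Lemma \ref{566} provides a lower bound for the total energy, the identity \eqref{303} splits that energy into the vortex-core part $T(\omega^\lambda)$ and the term $\frac{1}{2}\sum_i|\kappa_i|\mu^\lambda_i$, and Lemma \ref{422} gives an upper bound on $T(\omega^\lambda)$. Rearranging these three facts will isolate $\sum_i|\kappa_i|\mu^\lambda_i$ on one side and produce exactly the claimed lower bound (up to an irrelevant factor of $2$ absorbed into $C$).

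Concretely, I would first write, using \eqref{303},
\begin{equation*}
\frac{1}{2}\sum_{i=1}^{2}|\kappa_i|\mu^\lambda_i \;=\; E(\omega^\lambda) \;-\; T(\omega^\lambda).
\end{equation*}
Then I would substitute the lower bound $E(\omega^\lambda)\geq -\frac{1}{4\pi}\sum_{i=1}^{2}\kappa_i^2\ln\varepsilon_i - C$ from Lemma \ref{566} and the upper bound $T(\omega^\lambda)\leq C$ from Lemma \ref{422} to obtain
\begin{equation*}
\frac{1}{2}\sum_{i=1}^{2}|\kappa_i|\mu^\lambda_i \;\geq\; -\frac{1}{4\pi}\sum_{i=1}^{2}\kappa_i^2\ln\varepsilon_i - C.
\end{equation*}
Multiplying by $2$ and renaming the constant gives the statement of the lemma.

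There is essentially no obstacle here; all analytic work has already been done in the previous three results. The only thing to be careful about is that, since $\varepsilon_i = \sqrt{|\kappa_i|/(\lambda\pi)} \to 0$ as $\lambda \to \infty$, the quantity $-\ln\varepsilon_i$ is positive (and large) for large $\lambda$, so the inequality is genuinely informative: it forces $\sum_i|\kappa_i|\mu^\lambda_i$ to blow up at least logarithmically in $\lambda$. This growth is the qualitative content that will be exploited in the subsequent asymptotic analysis, even though the proof of this particular lemma is purely algebraic.
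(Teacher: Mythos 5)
Your proposal is correct and is exactly the paper's intended argument: the paper states this lemma without a written proof, introducing it with ``By combining Lemma \ref{566}, \eqref{303} and Lemma \ref{422}'', which is precisely the rearrangement you carry out. The arithmetic checks out, including the factor of $2$ absorbed into $C$.
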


Now we are able to estimate the diameter of the support of $\omega^\lambda_i$.
\begin{lemma}\label{351}
There exists $R>1$ such that $diam(supp(\omega^\lambda_i))<R\varepsilon_i$ for $i=1,2$, provided $\lambda$ is sufficiently large.
\end{lemma}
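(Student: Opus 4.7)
The plan is to first pin down $\mu^\lambda_i$ up to an $O(1)$ error, and then to apply a two-center bathtub (rearrangement) estimate at the two most distant points of $\text{supp}(\omega^\lambda_i)$ to constrain the diameter.

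First I would establish that $\mu^\lambda_i = \frac{|\kappa_i|}{2\pi}\ln(1/\varepsilon_i) + O(1)$. For the upper bound, fix $x \in B_i$ and split $\text{sgn}(\kappa_i)\psi^\lambda(x) = \lambda\int_{\Omega_i}G(x,y)\,dy - \lambda\int_{\Omega_j}G(x,y)\,dy$ (with $j\neq i$). The cross term is $O(1)$ since $\text{dist}(B_1,B_2) > 0$, while the bathtub principle applied to $\ln(1/|x-y|)$ together with $|\Omega_i|=\pi\varepsilon_i^2$ yields
\[
\lambda\int_{\Omega_i}G(x,y)\,dy \leq \frac{\lambda}{2\pi}\int_{B_{\varepsilon_i}(x)}\ln(1/|x-y|)\,dy + C = \frac{|\kappa_i|}{2\pi}\ln(1/\varepsilon_i) + C,
\]
so $\mu^\lambda_i \leq \frac{|\kappa_i|}{2\pi}\ln(1/\varepsilon_i) + C$. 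Substituting this upper bound for $\mu^\lambda_j$ into the preceding lemma $|\kappa_1|\mu^\lambda_1 + |\kappa_2|\mu^\lambda_2 \geq -\frac{1}{2\pi}\sum_{\ell}\kappa_\ell^2\ln\varepsilon_\ell - C$ immediately gives the matching lower bound $\mu^\lambda_i \geq \frac{|\kappa_i|}{2\pi}\ln(1/\varepsilon_i) - C$.

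Next, fix $i=1$ (the case $i=2$ is symmetric) and let $x_0, x_1 \in \text{supp}(\omega^\lambda_1)$ realize $D := \text{diam}(\text{supp}(\omega^\lambda_1))$. By continuity of $\psi^\lambda$, both $\psi^\lambda(x_0), \psi^\lambda(x_1) \geq \mu^\lambda_1$, and bounding the cross term with $\omega^\lambda_2$ as before,
\[
2\mu^\lambda_1 \leq \psi^\lambda(x_0)+\psi^\lambda(x_1) \leq \frac{\lambda}{2\pi}\int_{\Omega_1}\big[\ln(1/|x_0-y|)+\ln(1/|x_1-y|)\big]\,dy + C.
\]
If $D \leq \sqrt{2}\,\varepsilon_1$ we are done, so assume $D > \sqrt{2}\,\varepsilon_1$. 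The bathtub principle for the two-peaked function $f(y)=\ln(1/|x_0-y|)+\ln(1/|x_1-y|)$ identifies the super-level sets as Cassini ovals; for the level such that the super-level set has measure $|\Omega_1|=\pi\varepsilon_1^2$, this set (under $D > \sqrt{2}\,\varepsilon_1$) consists of two disjoint disks of radius $\varepsilon_1/\sqrt{2}$ centered at $x_0, x_1$. A direct log-integration on these two disks (using $|x_1-y| \in [D-\varepsilon_1/\sqrt{2}, D+\varepsilon_1/\sqrt{2}]$ on the disk around $x_0$, and symmetrically) yields
\[
\int_{\Omega_1} f(y)\,dy \leq \pi\varepsilon_1^2\big[\ln(1/\varepsilon_1)+\ln(1/D)+C\big],
\]
hence $2\mu^\lambda_1 \leq \frac{\kappa_1}{2\pi}[\ln(1/\varepsilon_1)+\ln(1/D)] + C$. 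Combining with the lower bound $2\mu^\lambda_1 \geq \frac{\kappa_1}{\pi}\ln(1/\varepsilon_1) - C$ gives $\frac{\kappa_1}{2\pi}\ln(D/\varepsilon_1) \leq C$, i.e., $D \leq R\,\varepsilon_1$ for some universal $R>1$.

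The main obstacle I anticipate is the two-center bathtub bookkeeping: one has to verify the super-level geometry (Cassini oval degenerating into two disjoint disks when $D$ is large relative to $\varepsilon_1$) and carry through the $\ln(1/D)$ accounting cleanly, in particular making sure the $h$-terms and the bounded interaction with $\omega^\lambda_2$ are absorbed uniformly. Once this is done, the remaining argument is a straightforward comparison of two $O(\ln(1/\varepsilon_1))$ quantities whose difference is forced to be bounded.
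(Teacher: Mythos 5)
Your strategy reaches the right conclusion, but it is genuinely different from the paper's. The paper never derives the individual asymptotics of $\mu^\lambda_i$; it feeds the lower bound $\sum_i|\kappa_i|\mu^\lambda_i\geq -\tfrac{1}{2\pi}\sum_i\kappa_i^2\ln\varepsilon_i-C$ directly into the inequality $sgn(\kappa_i)\psi^\lambda(x)>\mu^\lambda_i$ at an \emph{arbitrary} point $x$ of $supp(\omega^\lambda_i)$, subtracts the (uniformly bounded) near-field contribution over $B_{r\varepsilon_i}(x)$, and concludes that the mass of $\omega^\lambda_i$ outside $B_{r\varepsilon_i}(x)$ is at most $C/\ln r$. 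Choosing $r$ large, every point of the support carries more than $\tfrac12|\kappa_i|$ of the mass within distance $r\varepsilon_i$, and two such points at distance $\geq 2r\varepsilon_i$ would give total mass exceeding $|\kappa_i|$. This avoids any two-center potential estimate. Your route --- matching upper and lower bounds $\mu^\lambda_i=\tfrac{|\kappa_i|}{2\pi}\ln(1/\varepsilon_i)+O(1)$ followed by evaluation of $\psi^\lambda$ at the two diametral points --- costs a more delicate rearrangement step, but buys the sharp asymptotics of $\mu^\lambda_i$, which the paper never states explicitly and which are useful elsewhere (e.g.\ in Remark \ref{670} and in the convergence to the Rankine profile).

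The one step you must repair is the two-center estimate. The super-level sets of $f(y)=\ln\tfrac{1}{|x_0-y|}+\ln\tfrac{1}{|x_1-y|}$ are Cassini regions, but the one of area $\pi\varepsilon_1^2$ is \emph{not} two disks of radius $\varepsilon_1/\sqrt2$, and it is not even disconnected until $D$ exceeds roughly $\sqrt{2\pi}\,\varepsilon_1$ (the critical lemniscate $|x_0-y|\,|x_1-y|=(D/2)^2$ encloses area $D^2/2$). Fortunately your target inequality $\int_{\Omega_1}f\leq \pi\varepsilon_1^2\bigl[\ln(1/\varepsilon_1)+\ln(1/D)+C\bigr]$ is true and needs no Cassini geometry: split $\Omega_1$ into the disjoint pieces $\Omega_1\cap B_{D/2}(x_0)$, $\Omega_1\cap B_{D/2}(x_1)$, of measures $\alpha\pi\varepsilon_1^2$ and $\beta\pi\varepsilon_1^2$, and the remainder. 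On the piece near $x_k$ apply the one-center bathtub bound to $\ln\tfrac{1}{|x_k-y|}$ (giving $\alpha\pi\varepsilon_1^2\ln\tfrac{1}{\varepsilon_1}+C\varepsilon_1^2$, since $\alpha\ln\tfrac{1}{\sqrt{\alpha}}$ is bounded) and bound the other logarithm by $\ln\tfrac{2}{D}$; on the remainder bound both logarithms by $\ln\tfrac{2}{D}$. Since $\alpha+\beta\leq 1$ and $\ln\tfrac{1}{D}\leq\ln\tfrac{1}{\varepsilon_1}$, the coefficients recombine to give exactly your bound, and the final comparison $\tfrac{\kappa_1}{2\pi}\ln(D/\varepsilon_1)\leq C$ goes through. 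With that substitution the proof is complete.
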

\begin{proof}
For any $x\in supp(\omega^\lambda_i)$ there holds $sgn(\kappa_i)\psi^\lambda(x)>\mu^\lambda_i$, so from the definition of $\psi^\lambda$
\[\frac{1}{2\pi}\int_D\ln\frac{1}{|x-y|}sgn(\kappa_i)\omega^\lambda_idy>\mu^\lambda_i-C.\]
By the estimate of $\mu^\lambda_i$ we have
\[\sum_{i=1}^2\frac{|\kappa_i|}{2\pi}\int_D\ln\frac{1}{|x-y|}|\omega^\lambda_i(y)|dy\geq -\frac{1}{2\pi}\sum_{i=1}^2\kappa_i^2\ln\varepsilon_i-C,\]
which implies
\begin{equation}\label{1000}
\sum_{i=1}^2\frac{|\kappa_i|}{2\pi}\int_D\ln\frac{\varepsilon_i}{|x-y|}|\omega^\lambda_i(y)|dy\geq-C.
\end{equation}
On the other hand, for any $r>0$ to be determined, we have the following inequality
\begin{equation}\label{1001}
\frac{1}{2\pi}\int_{B_{r\varepsilon_i}(x)}\ln\frac{\varepsilon_i}{|x-y|}|\omega^\lambda_i(y)|dy\leq C.
\end{equation}
From \eqref{1000},\eqref{1001} we have
\[\sum_{i=1}^2\frac{|\kappa_i|}{2\pi}\int_{B_i\verb|\|B_{r\varepsilon_i}(x)}\ln\frac{\varepsilon_i}{|x-y|}|\omega^\lambda_i(y)|dy\geq-C,\]
thus
\[\sum_{i=1}^2\int_{B_i\verb|\|B_{r\varepsilon_i}(x)}|\omega^\lambda_i(y)|dy<C\frac{1}{\ln r}.\]
Then for fixed $i$,
\[\int_{B_i\verb|\|B_{r\varepsilon_i}(x)}|\omega^\lambda_i(y)|dy<C\frac{1}{\ln r},\]
so by choosing $r$ large enough we have for any $x\in supp(\omega^\lambda_i)$
\begin{equation}\label{367}
\int_{B_{r\varepsilon_i}(x)}|\omega^\lambda_i|>\frac{1}{2}|\kappa_i|.
\end{equation}
We claim that $diam(supp(\omega^\lambda_i))<2r\varepsilon_i$. In fact, if $diam(supp(\omega^\lambda_i))\geq2r\varepsilon_i$, then we can choose $x,y\in supp(\omega^\lambda_i)$ and $|x-y|\geq 2r\varepsilon_i$, then by \eqref{367}
\[\int_D|\omega^\lambda_i|\geq \int_{B_{r\varepsilon_i}(x)}|\omega^\lambda_i|+\int_{B_{r\varepsilon_i}(y)}|\omega^\lambda_i|>|\kappa_i|,\]
which is a contradiction.
The result then follows by taking $R=2r$.
\end{proof}

Finally, by comparing energy we can prove that the support of $\omega^\lambda_i$ shrinks to $\bar{x}_i$.
\begin{lemma}\label{352}
 $\frac{1}{\kappa_i}\int_Dx\omega^\lambda_i(x)dx\rightarrow \bar{x}_i$ as $\lambda \rightarrow +\infty$.
\end{lemma}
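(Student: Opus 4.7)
The plan is to compare the energy $E(\omega^\lambda)$ with the energy of the test function $\bar\omega=\sum_{i=1}^{2}sgn(\kappa_i)\lambda I_{B_{\varepsilon_i}(\bar x_i)}$ from Lemma \ref{566} and to exploit the strict local minimality of $(\bar x_1,\bar x_2)$ for $H$ on $\overline{B_1}\times\overline{B_2}$.

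Set $x_i^\lambda:=\kappa_i^{-1}\int_D x\,\omega^\lambda_i(x)\,dx\in\overline{B_i}$. By compactness of $\overline{B_i}$ and a standard subsequence argument, it suffices to show that for every sequence $\lambda_n\to+\infty$ with $x_i^{\lambda_n}\to x_i^\ast\in\overline{B_i}$ one must have $x_i^\ast=\bar x_i$. Note $x_1^\ast\neq x_2^\ast$ because $\overline{B_1}\cap\overline{B_2}=\varnothing$, so $H$ is continuous at $(x_1^\ast,x_2^\ast)$.

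The core step is the asymptotic expansion
\begin{equation*}
E(\omega^\lambda)\leq-\frac{1}{4\pi}\sum_{i=1}^{2}\kappa_i^2\ln\varepsilon_i+A-\frac{1}{2}H(x_1^\lambda,x_2^\lambda)+o(1),\qquad\lambda\to+\infty,
\end{equation*}
for an explicit constant $A$ independent of $\lambda$. To obtain it, decompose $G=\frac{1}{2\pi}\ln(1/|x-y|)-h$ and handle the three resulting pieces. For the logarithmic self-energy of $\omega^\lambda_i=sgn(\kappa_i)\lambda I_{\Omega_i}$ (note $|\Omega_i|=|\kappa_i|/\lambda=\pi\varepsilon_i^2=|B_{\varepsilon_i}|$), apply the Riesz rearrangement inequality to the shifted kernel $\ln(2R\varepsilon_i/|x-y|)$, which is nonnegative and symmetric-decreasing on the ball of radius $2R\varepsilon_i$ containing $supp(\omega_i^\lambda)$ by Lemma \ref{351}; rescaling to the unit ball yields $\frac{1}{4\pi}\int\int\ln(1/|x-y|)\omega_i^\lambda(x)\omega_i^\lambda(y)\,dxdy\leq-\frac{\kappa_i^2}{4\pi}\ln\varepsilon_i+A_i$ for an explicit constant $A_i$. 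Continuity of $h$ on compact subsets of $D\times D$ together with $diam(supp\,\omega^\lambda_i)\to 0$ gives $-\frac{1}{2}\int\int h(x,y)\omega_i^\lambda(x)\omega_i^\lambda(y)\,dxdy=-\frac{1}{2}\kappa_i^2 h(x_i^\lambda,x_i^\lambda)+o(1)$, and the uniform separation of $\overline{B_1}$ and $\overline{B_2}$ ensures $\int\int G(x,y)\omega_1^\lambda(x)\omega_2^\lambda(y)\,dxdy=\kappa_1\kappa_2 G(x_1^\lambda,x_2^\lambda)+o(1)$. Summing and recalling \eqref{211} gives the displayed bound with $A=A_1+A_2$.

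For the test function $\bar\omega$ the same decomposition holds with equality (the rearrangement step is an equality since each $\bar\omega_i$ is already a ball) and with $x_i^\lambda$ replaced by $\bar x_i$, producing $E(\bar\omega)=-\frac{1}{4\pi}\sum_{i=1}^{2}\kappa_i^2\ln\varepsilon_i+A-\frac{1}{2}H(\bar x_1,\bar x_2)+o(1)$. Combining with $E(\omega^\lambda)\geq E(\bar\omega)$ yields $H(x_1^\lambda,x_2^\lambda)\leq H(\bar x_1,\bar x_2)+o(1)$; passing to the subsequential limit gives $H(x_1^\ast,x_2^\ast)\leq H(\bar x_1,\bar x_2)$, and strict local minimality forces $x_i^\ast=\bar x_i$. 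The main obstacle is the rearrangement step for the log self-energy: one needs the upper bound to match the value for the ball $B_{\varepsilon_i}$ up to $o(1)$ so that the constants $A_i$ cancel when comparing $E(\omega^\lambda)$ with $E(\bar\omega)$. The diameter estimate from Lemma \ref{351} is essential, since it confines $\omega_i^\lambda$ to a scale $\sim\varepsilon_i$ on which the kernel can be shifted to become nonnegative and symmetric-decreasing.
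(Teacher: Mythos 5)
Your proposal is correct and follows essentially the same route as the paper: compare $E(\omega^\lambda)$ with the ball test function $\bar\omega$, use the Riesz rearrangement inequality to control the singular logarithmic self-interactions, pass to the limit in the regular parts to obtain $H(x_1^*,x_2^*)\leq H(\bar x_1,\bar x_2)$, and conclude from strict local minimality. The only (harmless) difference is presentational: you expand both energies explicitly with matching constants $A_i$, whereas the paper simply subtracts the Riesz inequality from the energy inequality so the divergent $\ln\varepsilon_i$ terms cancel without being computed.
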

\begin{proof}
Denote $x^\lambda_i= \frac{1}{\kappa_i}\int_Dx\omega^\lambda_i(x)dx$, then $x^\lambda_i\in \overline{B_i}$. For any sequence $\{x^{\lambda_j}_i\},\lambda_j\rightarrow +\infty$, there exists a subsequence $\{x^{\lambda_{j_k}}_i\}$ such that
$x^{\lambda_{j_k}}_i\rightarrow x_i^*\in \overline{B_i}$. For simplicity, we still denote the subsequence by $\{x^{\lambda_k}_i\}$. It suffices to show that $x_i^*=\bar{x}_i$.

As in Lemma \ref{566} we define test function $\bar{\omega}^\lambda=\sum_{i=1}^{2}\bar{\omega}^\lambda_i$, where $\bar{\omega}^\lambda_i=sgn({\kappa_i})\lambda I_{B_{\varepsilon_i}(\bar{x}_i)}$. It is easy to check that $\bar{\omega}^\lambda\in K_\lambda(D)$, so we have $ E(\bar{\omega}^\lambda)\leq E(\omega^\lambda)$, which means
\[\begin{split}
\int_D\int_D G(x,y)\bar{\omega}^{\lambda_k}(x)\bar{\omega}^{\lambda_k}(y)dxdy
\leq \int_D\int_D G(x,y)\omega^{\lambda_k}(x)\omega^{\lambda_k}(y)dxdy,
\end{split}\]
or equivalently
\begin{equation}\label{79}
\sum_{1\leq i,j\leq2}\int_D\int_D G(x,y)\bar{\omega}_i^{\lambda_k}(x)\bar{\omega}_j^{\lambda_k}(y)dxdy
\leq \sum_{1\leq i,j\leq2}\int_D\int_D G(x,y)\omega_i^{\lambda_k}(x)\omega_j^{\lambda_k}(y)dxdy.
\end{equation}
On the other hand, Riesz rearrangement inequality(see \cite{LL}, 3.7) gives
\begin{equation}\label{71}
\sum_{i=1}^{2}\int_D\int_D -\frac{1}{2\pi}\ln\frac{1}{|x-y|}\bar{\omega}_i^{\lambda_k}(x)\bar{\omega}_i^{\lambda_k}(y)dxdy
\geq \sum_{i=1}^{2}\int_D\int_D -\frac{1}{2\pi}\ln\frac{1}{|x-y|}\omega_i^{\lambda_k}(x)\omega_i^{\lambda_k}(y)dxdy.
\end{equation}
Adding \eqref{71} to \eqref{79} gives
\begin{equation}\label{787}
\begin{split}
\sum_{i\neq j}\int_D\int_DG(x,y)\bar{\omega}_i^{\lambda_k}(x)\bar{\omega}_j^{\lambda_k}(y)dxdy-\sum_{i=1}^{2}\int_D\int_Dh(x,y)\bar{\omega}_i^{\lambda_k}(x)\bar{\omega}_i^{\lambda_k}(y)dxdy
\\
\leq\sum_{i\neq j}\int_D\int_D G(x,y)\omega_i^{\lambda_k}(x)\omega_i^{\lambda_k}(y)dxdy-\sum_{i=1}^{2}\int_D\int_Dh(x,y)\omega_i^{\lambda_k}(x)\omega_i^{\lambda_k}(y)dxdy.
\end{split}
\end{equation}
Since $\omega^\lambda_i\rightarrow\kappa_i\bm{\delta}_{x^*_i}$ and $\bar{\omega}^\lambda_i\rightarrow\kappa_i\bm{\delta}_{\bar{x}_i}$ in the sense of distribution(here $\bm{\delta}_x$ denotes the Dirac measure with unit mass concentrated at $x$), passing to the limit in \eqref{787} gives
\[ -H(\bar{x}_1,\bar{x}_2)\leq-H(x^*_1,x^*_2),\]
or equivalently
\[ H(x^*_1,x^*_2)\leq H(\bar{x}_1,\bar{x}_2).\]
Recall that $(\bar{x}_1,\bar{x}_2)$ is the unique minimum point of $H$ in $\overline{B_1}\times \overline{B_2}$, so we have
\[(x^*_1,x^*_2)=(\bar{x}_1,\bar{x}_2).\]
 This completes the proof.
\end{proof}

\subsection{$\omega^\lambda$ is a steady solution}
By combining Lemma \ref{351} and Lemma \ref{352} we know that $dist(supp(\omega^\lambda_i),\partial B_i)>0$ if $\lambda$ is large enough, which can be used to prove that $\omega^\lambda$ is a steady vortex patch.
\begin{theorem}

$\omega^\lambda$ satisfies $\eqref{997*}$ provided $\lambda$ is large enough.
\end{theorem}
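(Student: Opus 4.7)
The plan is to show that $\omega^\lambda$ satisfies the weak form \eqref{997*} by exploiting the fact that $\omega^\lambda$ is not just a maximizer on $K_\lambda$, but in fact a local maximizer with respect to a richer class of area-preserving rearrangements, because its support lies strictly interior to $B_1\cup B_2$. Concretely, from Lemma \ref{351} and Lemma \ref{352}, for $\lambda$ sufficiently large the diameter of $\mathrm{supp}(\omega^\lambda_i)$ is of order $\varepsilon_i\to 0$ while its center of vorticity tends to $\bar{x}_i\in B_i$, so there exists $\eta>0$ such that $\mathrm{dist}(\mathrm{supp}(\omega^\lambda_i),\partial B_i)>\eta$ for each $i=1,2$.

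Given any test function $\xi\in C_0^\infty(D)$, I would then consider the Hamiltonian flow $\Phi_t$ generated by $J\nabla\xi$, i.e.\ $\frac{d}{dt}\Phi_t(x)=J\nabla\xi(\Phi_t(x))$ with $\Phi_0=\mathrm{id}$. Since $\nabla\cdot(J\nabla\xi)=0$, $\Phi_t$ is area-preserving on $D$, and because $\xi$ is compactly supported, $\Phi_t$ differs from the identity by $O(t)$ uniformly. Define $\omega_{(t)}(x)=\omega^\lambda(\Phi_t^{-1}(x))$ and $\omega_{(t),i}(x)=\omega^\lambda_i(\Phi_t^{-1}(x))$. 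For $|t|$ small, the interior distance $\eta$ guarantees $\Phi_t(\mathrm{supp}(\omega^\lambda_i))\subset B_i$, and since $\Phi_t$ is measure preserving the level-set constraints $0\le \mathrm{sgn}(\kappa_i)\omega_{(t),i}\le \lambda$ and $\int_D\omega_{(t),i}=\kappa_i$ are preserved. Hence $\omega_{(t)}\in K_\lambda$ for all sufficiently small $|t|$.

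Because $\omega^\lambda$ maximizes $E$ on $K_\lambda$, the scalar map $t\mapsto E(\omega_{(t)})$ attains a maximum at $t=0$ and is differentiable there, so $\frac{d}{dt}E(\omega_{(t)})\big|_{t=0}=0$. I would compute this derivative by changing variables $x'=\Phi_t^{-1}(x)$, $y'=\Phi_t^{-1}(y)$ in the kernel representation of $E$, which gives
\begin{equation*}
E(\omega_{(t)})=\tfrac12\int_D\int_D G(\Phi_t(x'),\Phi_t(y'))\,\omega^\lambda(x')\omega^\lambda(y')\,dx'\,dy'.
\end{equation*}
Differentiating at $t=0$ and using the symmetry of $G$ together with $\psi^\lambda(x)=\int_D G(x,y)\omega^\lambda(y)\,dy$ yields
\begin{equation*}
\left.\tfrac{d}{dt}E(\omega_{(t)})\right|_{t=0}=\int_D \nabla\psi^\lambda\cdot J\nabla\xi\;\omega^\lambda\,dx=-\int_D\omega^\lambda\,\partial(\xi,\psi^\lambda)\,dx,
\end{equation*}
since $\nabla\psi\cdot J\nabla\xi=\partial_1\psi\,\partial_2\xi-\partial_2\psi\,\partial_1\xi=-\partial(\xi,\psi)$. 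Setting this to zero gives exactly \eqref{997*} for the chosen $\xi$, and since $\xi\in C_0^\infty(D)$ was arbitrary the claim follows.

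The only delicate point is the admissibility step: we must know that the perturbation stays inside $K_\lambda$. This is where the asymptotic estimates of the previous subsection are essential, since without the uniform interior separation $\mathrm{dist}(\mathrm{supp}(\omega^\lambda_i),\partial B_i)>\eta$ one could only use variations vanishing near $\partial B_i$, which would not suffice to recover the full weak form. Once that interior separation is in hand, the rest is the standard Lagrangian derivative computation along an area-preserving flow, and the antisymmetry of the Poisson bracket $\partial(\cdot,\cdot)$ converts the first-order optimality condition directly into \eqref{997*}.
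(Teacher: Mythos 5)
Your proposal is correct and follows essentially the same route as the paper: both use the area-preserving flow $\Phi_t$ generated by $J\nabla\xi$, invoke the asymptotic estimates to ensure $\operatorname{dist}(\mathrm{supp}(\omega^\lambda_i),\partial B_i)>0$ so that the transported vorticity stays in $K_\lambda$ for small $|t|$, and then read off \eqref{997*} from the vanishing first variation of $E$ along this flow. The only cosmetic difference is your use of $\Phi_t^{-1}$ in place of the paper's $\Phi_t$ in defining $\omega_{(t)}$, which merely reverses the sign of $t$.
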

\begin{proof}
For any given $\xi\in C^{\infty}_0(D)$, we define a family of $C^1$ transformations $\Phi_t(x), t\in(-\infty,+\infty)$, from $D$ to $D$ by the following dynamical system
\begin{equation}\label{400}
\begin{cases}\frac{d\Phi_t(x)}{dt}=J\nabla\xi(\Phi_t(x)),\,\,\,t\in\mathbb R, \\
\Phi_0(x)=x,
\end{cases}
\end{equation}
where $J$ denotes clockwise rotation through $\frac{\pi}{2}$ as before. Note that $\eqref{400}$ is solvable for all $t$ since $J\nabla\xi$ is a smooth vector field with compact support in $D$. It's easy to see that $J\nabla\xi$ is divergence-free, so by Liouville theorem(see \cite{MPu}, Appendix 1.1) $\Phi_t(x$) is area-preserving. Now define
\begin{equation}
\omega_{(t)}(x)=\omega^\lambda(\Phi_t(x)).
\end{equation}
Since $supp(\omega^\lambda_i)$ is away from $\partial B_i$, we have $\omega_{(t)}\in K_\lambda$ if $|t|$ is small. So $E(\omega_{(t)})$ attains its maximum at $t=0$ and thus $\frac{dE(\omega_{(t)})}{dt}=0$.

On the other hand,
\begin{equation}
\begin{split}
E(\omega_{(t)})=&\frac{1}{2}\int_D\int_DG(x,y)\omega^\lambda(\Phi_t(x))\omega^\lambda(\Phi_t(y))dxdy\\
=&\frac{1}{2}\int_D\int_DG(\Phi_{-t}(x),\Phi_{-t}(y))\omega^\lambda(x)\omega^\lambda(y)dxdy\\
=&E(\omega^\lambda)+t\int_D\omega^\lambda\partial(\psi^\lambda,\xi)+o(t),
\end{split}
\end{equation}
as $t\rightarrow 0$. So we have
\[\int_D\omega^\lambda\partial(\psi^\lambda,\xi)=0,\]
which completes the proof.
\end{proof}
\section{Energy and Stability}
In this section, we discuss the energy and stability of the vortex patch solutions obtained in Section 3.

\subsection{The notion of stability} We recall some results on the solvability of the initial value problem for the 2-D incompressible Euler equations.

The equations are as follows:
\begin{equation}\label{139}
\begin{cases}

  \partial_t\mathbf{v}+(\mathbf{v}\cdot\nabla)\mathbf{v}=-\nabla P \,\,\,\,\,\,\,\,\,\,\,\text{in $D$},\\
   \nabla\cdot\mathbf{v}=0 \,\,\,\,\,\,\,\,\,\,\,\,\,\,\,\,\,\,\,\,\,\,\,\,\,\,\,\,\,\,\,\,\,\,\,\,\,\,\,\,\,\,\,\,\,\,\text{in $D$},\\
 \mathbf{v}(x,0)=\mathbf{v}_0(x)\,\,\,\,\,\,\,\,\,\,\,\,\,\,\,\,\,\,\,\,\,\,\,\,\,\,\,\,\,\,\,\text{in $D$},
 \\ \mathbf{v}\cdot \vec{n}=0 \,\,\,\,\,\,\,\,\,\,\,\,\,\,\,\,\,\,\,\,\,\,\,\,\,\,\,\,\,\,\,\,\,\,\,\,\,\,\,\,\,\,\,\,\,\,\,\text{on $\partial D$ }.

\end{cases}
\end{equation}
Here $\mathbf{v}=(v_1,v_2)$ is the velocity field of the fluid depending on $x$ and $t$, $P$ is the pressure, $\mathbf{v}_0(x)$ is the initial velocity field and $\vec{n}$ is the outward unit normal of $\partial D$.  Similarly, by introducing the vorticity $\omega = curl\mathbf{v}$, \eqref{139} can be expressed as the following vorticity form:
\begin{equation}\label{913}
\begin{cases}
\partial_t\omega+\partial(\omega,\psi)=0\,\,\,\,\,\,\,\,\text{in $D\times(0,+\infty)$},\\
\omega(x,0)=\omega_0(x) \,\,\,\,\,\,\,\,\,\,\,\,\,\,\text{in $D$}.
\end{cases}
\end{equation}
Here as given in Section 2, $\partial(f,g)=\partial_1f\partial_2g-\partial_2f\partial_1g$ for any functions $f,g$, and $\psi(x,t)=\int_DG(x,y)\omega(y,t)dy$.
\begin{definition}
A function $\omega(x,t)\in L^{\infty}(D\times(0,+\infty))$ is called the weak solution of \eqref{913} if it satisfies
\begin{equation}\label{997}
  \int_D\omega(x,0)\xi(x,0)dx+\int_0^{+\infty}\int_D\omega(\partial_t\xi+\partial(\xi,\psi))dxdt=0
  \end{equation}
  for all $\xi\in C_0^{\infty}(D\times[0,+\infty))$, where $\psi(x,t)=\int_DG(x,y)\omega(y,t)dy$.
  \end{definition}
By Yudovich \cite{Y}, for any initial vorticity $\omega(x,0)\in L^{\infty}(D)$ there is a unique solution to $\eqref{997}$ and
 $\omega(x,t)\in L^{\infty}(D\times(0,+\infty))\cap C([0,+\infty);L^p(D)), \forall \,\,p\in [1,+\infty)$. Moreover $\omega(x,t)\in F_{\omega_0}$ for all $t\geq 0$. Here $F_{\omega}$ denotes the rearrangement class of a given function $\omega$, i.e.,
 \begin{equation}
 F_\omega\triangleq\{ v |  |\{v>a\}|=|\{\omega>a\}| ,\forall a\in \mathbb R\}.
 \end{equation}

In the sequel for simplicity we also write $\omega(x,t)$ as $\omega_t(x)$.

 Now we can give the definition of stability for steady vortex patches.
\begin{definition}
A steady vortex patch $\omega$ is called stable, if for any $\varepsilon >0$
, there exists $\delta >0$, such that for any $\omega_0\in F_\omega$, $|\omega_0-\omega|_{L^1}<\delta$, we have $|\omega_t-\omega|_{L^1}<\varepsilon$ for all $t\geq 0$. Here $\omega_t(x)=\omega(x,t)$ is the solution of $\eqref{997}$ with initial vorticity $\omega_0$.
\end{definition}
\begin{remark}
Here by stability we mean Liapunov stability, i.e., if the initial perturbation is small, then the perturbed motion is close to the steady flow for all time("closeness" is measured by $L^1$ norm).
\begin{remark}
Here for simplicity, we assume that the initial perturbation class is $F_\omega$ which in fact can be extended to a more general function class, see \cite{B3}.
\end{remark}
\end{remark}
\subsection{A stability theorem}
Now we state a stability theorem due to Burton.

Recall that for an energy conserving dynamical system in $\mathbb R^N$, a strict local minimum point of the energy must be stable, see \cite{MPu}, Chapter 3, Theorem 1.5. For the 2-D incompressible Euler flows, the energy is conserved and the vorticity moves on an isovortical surface. Based on these observations, Burton in \cite{B3} proved a stability theorem for steady vortex flows. In the case of vortex patches, his theorem can be stated as follows:
\begin{theorem}[Burton, \cite{B3}]\label{886}
Let $\omega$ be a steady vortex patch. If $\omega$ is a strict local maximizer of the kinetic energy relative to $F_\omega$,
 then $\omega$ is stable. Here $\omega$ being a strict local maximizer of the kinetic energy relative to $F_\omega$ means that there exists some $\delta_0>0$ such that for any $\bar{\omega}\in F_\omega$, $|\bar{\omega}-\omega|_{L^1}<\delta_0$ and $E(\bar{\omega})=E(\omega)$, we have $\bar{\omega}=\omega$.
\end{theorem}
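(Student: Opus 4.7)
The plan is to argue by contradiction, using the two conservation laws along the Yudovich flow: the isovortical constraint ($\omega_t^n \in F_\omega$ whenever $\omega_0^n \in F_\omega$) and conservation of kinetic energy ($E(\omega_t^n) = E(\omega_0^n)$). Assume $\omega$ is not stable. Then there exist $\varepsilon_0 \in (0,\delta_0)$, initial vorticities $\omega_0^n \in F_\omega$ with $\|\omega_0^n - \omega\|_{L^1} \to 0$, and times $t_n \geq 0$ with $\|\omega_{t_n}^n - \omega\|_{L^1} \geq \varepsilon_0$. Using the continuity $t \mapsto \omega_t^n$ in $L^1(D)$ provided by Yudovich's theorem, I would replace $t_n$ by the first time the $L^1$-distance reaches $\varepsilon_0$, so that $\|\omega_{t_n}^n - \omega\|_{L^1} = \varepsilon_0$.

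Because $\omega_{t_n}^n \in F_\omega$, the sequence is bounded in $L^\infty(D)$ and weak-$\ast$ precompact; extract a subsequence with $\omega_{t_n}^n \overset{\ast}{\rightharpoonup} \tilde\omega$. The associated stream functions $\psi_n = G\ast\omega_{t_n}^n$ are uniformly bounded in $W^{2,p}(D)$ for every finite $p$ by $L^p$-elliptic regularity, hence precompact in $C^0(\overline D)$ by Sobolev embedding, so $\psi_n \to \tilde\psi = G\ast\tilde\omega$ uniformly along a further subsequence. Pairing weak-$\ast$ convergence of $\omega_{t_n}^n$ with uniform convergence of $\psi_n$ gives $E(\omega_{t_n}^n) \to E(\tilde\omega)$. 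Combined with $E(\omega_{t_n}^n) = E(\omega_0^n) \to E(\omega)$ (the last limit from $L^2$-continuity of $E$ and strong $L^2$-convergence of $\omega_0^n$, forced by $L^1$-convergence together with the uniform $L^\infty$ bound), one gets $E(\tilde\omega) = E(\omega)$. Lower semicontinuity of $\|\cdot\|_{L^1}$ also yields $\|\tilde\omega - \omega\|_{L^1} \leq \varepsilon_0 < \delta_0$.

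If one can show $\tilde\omega \in F_\omega$, the proof concludes quickly: the strict local maximizer hypothesis forces $\tilde\omega = \omega$, and the identity $\|\omega_{t_n}^n\|_{L^2} \equiv \|\omega\|_{L^2}$ (rearrangements preserve $L^2$) combined with weak convergence to $\omega$ upgrades to strong $L^2$-convergence (weak plus matching norms), hence to strong $L^1$-convergence on the bounded domain $D$, contradicting $\|\omega_{t_n}^n - \omega\|_{L^1} = \varepsilon_0 > 0$.

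The main obstacle is exactly the identification $\tilde\omega \in F_\omega$. A priori $\tilde\omega$ lies only in the weak-$\ast$ closure $\overline{F_\omega}^{w\ast}$, which by the Douglas / Hardy--Littlewood--Polya characterization equals the strictly larger convex set $\{v\in L^\infty(D) : \int_D \Phi(v) \leq \int_D \Phi(\omega)\text{ for every convex }\Phi\}$, and the energy $E=\tfrac12\|\cdot\|_{H^{-1}}^2$ does not directly control the $L^2$-norm. My plan to close this gap is to exploit strict convexity of $E$ as a quadratic form with positive-definite kernel $G$: on the convex set $\overline{F_\omega}^{w\ast}$ the segment $\omega_a := a\omega + (1-a)\tilde\omega$ satisfies
\[
E(\omega_a) \;=\; E(\omega) - a(1-a)\,E(\omega - \tilde\omega),
\]
so $\tilde\omega\ne\omega$ would force $E(\omega_a) < E(\omega)$ strictly for every $a\in(0,1)$, and taking $a$ close to $1$ makes $\|\omega_a - \omega\|_{L^1}=(1-a)\|\tilde\omega-\omega\|_{L^1}$ arbitrarily small. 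Combining weak-$\ast$ density of $F_\omega$ in $\overline{F_\omega}^{w\ast}$ with the weak-$\ast$ continuity of $E$ already established, one would attempt to produce approximating $\bar\omega_k \in F_\omega$ inside the $\delta_0$-ball whose energies converge to $E(\omega)$ from below, and then run a sup-attainment argument on the weak-$\ast$ closure of the ``sphere'' $\{\bar\omega \in F_\omega: \|\bar\omega-\omega\|_{L^1}=\varepsilon_0\}$ to extract a uniform energy gap that contradicts $E(\omega_{t_n}^n)\to E(\omega)$. Carrying out this weak-$\ast$ approximation so that the $L^1$-localization required by the strict local maximizer hypothesis is preserved is the delicate point, and it is exactly where Burton's refinement of the Arnold--Moser stability scheme is required.
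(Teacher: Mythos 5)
The paper does not prove this statement: it is quoted from Burton \cite{B3}, so there is no in-paper argument to compare against, and your proposal must stand on its own as a proof of Burton's theorem. The first half of your scheme is the correct and standard one (and is indeed how Burton begins): contradiction, first exit time from the $\varepsilon_0$-ball using $\omega_t\in C([0,\infty);L^1)$, conservation of $E$ and of the rearrangement class along the Yudovich flow, weak-$\ast$ compactness of $F_\omega$, weak-$\ast$ continuity of $E$ via compactness of the Green operator, and the ``weak convergence plus equal $L^2$ norms implies strong convergence'' upgrade at the end. You have also correctly located the crux: the weak-$\ast$ limit $\tilde\omega$ need not lie in $F_\omega$, and the strictness hypothesis says nothing about points of $\overline{F_\omega}^{\,w\ast}\setminus F_\omega$.

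However, the proposal does not close this gap, and the two repairs you sketch do not work as stated. The convexity identity gives $E(\omega_a)<E(\omega)$ for $a\in(0,1)$ when $\tilde\omega\neq\omega$; but this inequality is exactly what one expects near a maximizer and produces no contradiction, and since $\omega_a$ is a proper convex combination it is generically not a rearrangement of $\omega$, so the strict-local-maximizer hypothesis cannot be applied to it in any case. The ``uniform energy gap'' strategy fails for the same structural reason: the hypothesis controls $E$ only on $F_\omega\cap B_{\delta_0}(\omega)$, whereas a sup-attainment argument over the weak-$\ast$ closure of the sphere $\{\bar\omega\in F_\omega:\|\bar\omega-\omega\|_{L^1}=\varepsilon_0\}$ lands you at points of $\overline{F_\omega}^{\,w\ast}\setminus F_\omega$ about which you know nothing; you cannot rule out that $E$ attains the value $E(\omega)$ there. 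What is actually needed --- and what constitutes the substance of Burton's proof --- is a localized version of his earlier rearrangement results: $E$ is convex and weak-$\ast$ continuous, $\overline{F_\omega}^{\,w\ast}$ is convex and weak-$\ast$ compact with extreme-point set exactly $F_\omega$ (Douglas/Ryff), so suprema of $E$ over suitable weak-$\ast$ compact convex subsets are attained at genuine rearrangements, to which the strictness hypothesis then applies; carrying the $L^1$-ball constraint through this extreme-point argument is the delicate step. As it stands, your argument proves the theorem only under the stronger (and unavailable) hypothesis that $\omega$ is a strict local maximizer relative to the weak-$\ast$ closure of $F_\omega$, so the proof is incomplete at its decisive point.
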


To apply Theorem \ref{886}, firstly we need to prove that the steady vortex patch is a local energy maximizer in rearrangement class, and then prove that the local maximizer is in fact strict.
\subsection{Energy characterization} Now we show that $\omega^\lambda$ is a local energy maximizer in $F_{\omega^\lambda}$ if $\lambda$ is sufficiently large.

To show this, we need a non-degenerate property of $\nabla\psi^\lambda$ on $\partial\Omega_i$.

\begin{lemma}\label{576}
For sufficiently large $\lambda$, $\partial\Omega_i$ is a $C^1$ closed curve, and $\frac{\partial\psi^\lambda}{\partial\vec{n}}<0$ on $\partial\Omega_i$, where $\vec{n}$ is the outward unit normal of $\partial\Omega_i.$
\end{lemma}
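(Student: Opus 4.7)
\textit{Proof plan.} The strategy is to pass to the natural vortex-core scale $\varepsilon_i=\sqrt{|\kappa_i|/(\lambda\pi)}$, identify the asymptotic profile of the rescaled patch as a disk, and transfer the resulting non-degeneracy of the gradient back to $\psi^\lambda$. I treat $i=1$; the case $i=2$ is analogous upon replacing $\psi^\lambda$ by $-\psi^\lambda$ (so that the relevant super-level set on which $\omega_2^\lambda$ is nonzero becomes a positivity set in the rescaled picture). Set $y=(x-\bar x_1)/\varepsilon_1$ and introduce
\[
W^\lambda(y):=\psi^\lambda(\bar x_1+\varepsilon_1 y)-\mu_1^\lambda,\qquad \tilde\Omega_1:=\varepsilon_1^{-1}(\Omega_1-\bar x_1).
\]
By Lemmas \ref{351} and \ref{352}, $\tilde\Omega_1$ has fixed area $\pi$ and is contained in a fixed ball $B(0,R)$, and $\{W^\lambda>0\}=\tilde\Omega_1$. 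Since $\omega_2^\lambda$ is supported at Euclidean distance bounded below from $\bar x_1$, for $\lambda$ large we have $-\Delta W^\lambda=(\kappa_1/\pi)I_{\tilde\Omega_1}$ on any fixed ball. Splitting $\psi^\lambda$ into the logarithmic potential of $\omega_1^\lambda$ plus a smooth background (coming from $\omega_2^\lambda$ and the regular part $h$ of $G$) yields the representation
\begin{equation*}
W^\lambda(y)=\frac{\kappa_1}{2\pi^2}\int_{\tilde\Omega_1}\ln\frac{1}{|y-w|}\,dw+g^\lambda(y)-c^\lambda,
\end{equation*}
where $g^\lambda$ has uniformly bounded derivatives on compact sets and $c^\lambda:=\mu_1^\lambda+\frac{\kappa_1}{2\pi}\ln\varepsilon_1$.

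Next I would prove $c^\lambda=O(1)$. A bathtub-principle bound on $\sup_{B_1}\psi^\lambda$ (replacing $\Omega_1$ by the ball of the same area centered at $x$) produces the upper bound $\mu_1^\lambda\leq-\frac{\kappa_1}{2\pi}\ln\varepsilon_1+C$, and the analogous upper bound for $\mu_2^\lambda$ combined with the lower bound on $\sum|\kappa_i|\mu_i^\lambda$ derived just before Lemma \ref{351} yields the matching individual lower bound. Hence $W^\lambda$ is uniformly bounded in $L^\infty_{loc}$, and then in $C^{1,\alpha}_{loc}(\mathbb R^2)$ by elliptic regularity applied to $-\Delta W^\lambda\in L^\infty$.

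Pass to the limit along a subsequence: $W^\lambda\to W^*$ in $C^1_{loc}(\mathbb R^2)$ and $I_{\tilde\Omega_1}\overset{\ast}{\rightharpoonup}\chi$ in $L^\infty$ with $0\leq\chi\leq 1$, $\int\chi=\pi$, and $-\Delta W^*=(\kappa_1/\pi)\chi$. The crucial step is to identify $\chi=I_{D^*}$ for a disk $D^*$ of area $\pi$: here I would push the kinetic-energy maximality of $\omega^\lambda$ through the rescaling, subtract the divergent $\ln\varepsilon_1$ pieces, and invoke the Riesz rearrangement inequality for the radially decreasing kernel $\ln\frac{1}{|y-w|}$ (valid since $\tilde\Omega_1\subset B(0,R)$ is uniformly bounded) to conclude that any limit shape must saturate rearrangement, hence is a disk. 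Once this is in hand, the 2D shell theorem makes $\nabla W^*$ on $\partial D^*$ explicit: at each $y_0\in\partial D^*$, $\nabla W^*(y_0)$ is a nonzero vector pointing toward the center of $D^*$, so in particular $\partial W^*/\partial\vec n<0$ strictly on $\partial D^*$.

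To conclude, the $C^1_{loc}$ convergence $W^\lambda\to W^*$ combined with the Hausdorff convergence $\partial\tilde\Omega_1\to\partial D^*$ (a consequence of the density characterization $\tilde\Omega_1=\{W^\lambda>0\}$ and the non-vanishing of $\nabla W^*$ near $\partial D^*$) implies that for $\lambda$ large $\nabla W^\lambda$ stays uniformly bounded away from $0$ in a neighborhood of $\partial\tilde\Omega_1$, with the correct sign of its normal component. The implicit function theorem then produces $\partial\tilde\Omega_1$, and hence $\partial\Omega_1=\{\psi^\lambda=\mu_1^\lambda\}\cap B_1$, as a $C^1$ closed curve, and unwinding the rescaling converts the sign into the claimed $\partial\psi^\lambda/\partial\vec n<0$ on $\partial\Omega_1$. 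The main obstacle is the limit identification: extracting enough rigidity from the rescaled energy maximality (not just compactness) to force the limiting density to be the indicator of a disk — once this is secured, the remaining steps are routine elliptic and compactness arguments.
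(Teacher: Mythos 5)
Your plan is essentially the proof the paper defers to: the cited result (Theorem 4.5 of Turkington \cite{T}) is proved exactly by this rescaling to the core scale $\varepsilon_i$, identification of the limit profile as the Rankine vortex via energy maximality plus the equality case of the Riesz rearrangement inequality, and transfer of the non-vanishing normal derivative back through $C^1_{loc}$ convergence and the implicit function theorem. One small correction: recenter the blow-up at a point of $supp(\omega_1^\lambda)$ (e.g.\ its center of mass $x_1^\lambda$) rather than at $\bar{x}_1$, since Lemma \ref{352} gives $x_1^\lambda\to\bar{x}_1$ with no rate, so $\varepsilon_1^{-1}(\Omega_1-\bar{x}_1)$ need not stay in a fixed ball, whereas Lemma \ref{351} does confine $\varepsilon_1^{-1}(\Omega_1-x_1^\lambda)$ to $B(0,R)$; with that change the background term $g^\lambda$ still has uniformly bounded derivatives and the rest of the argument goes through unchanged.
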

\begin{proof}
The proof is exactly the same as Theorem 4.5 in \cite{T}, therefore we omit it here.

\end{proof}

\begin{remark}
One can also prove that $\partial \Omega_i$ is an infinitesimal circle and $\psi^\lambda$(after a suitable scaling)  converges to the Rankine stream function in $C^1_{loc}$ sense near each $\bar{x}_i$ as $\lambda\rightarrow +\infty$.
\end{remark}

\begin{theorem}\label{385}
For sufficiently large $\lambda$, $\omega^\lambda$ is a local energy maximizer in $F_{\omega^\lambda}$.

\end{theorem}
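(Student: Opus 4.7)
The starting point is the exact expansion
\[
E(\tilde\omega) - E(\omega^\lambda) = \int_D \psi^\lambda (\tilde\omega - \omega^\lambda)\,dx + E(\tilde\omega - \omega^\lambda),
\]
in which the self-energy term $E(\tilde\omega - \omega^\lambda) = \tfrac12 \|\nabla(-\Delta)^{-1}(\tilde\omega - \omega^\lambda)\|_{L^2}^2$ is non-negative, so the task reduces to making the linear term sufficiently negative to absorb it.

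Writing $\tilde\omega = \lambda I_{A_1} - \lambda I_{A_2}$ with $|A_i| = |\Omega_i|$ and $A_1\cap A_2 = \varnothing$, I would split into two cases. In the no-escape case $A_1\subset B_1$ and $A_2\subset B_2$, one has $\tilde\omega\in K_\lambda$ directly, so the maximizing property established earlier gives $E(\tilde\omega)\le E(\omega^\lambda)$ without even using the above expansion. In the escape case $\eta := |A_1\setminus B_1| + |A_2\setminus B_2| > 0$, the first step is to upgrade the level-set description of $\Omega_i$ from local to global: for $\lambda$ large,
\[
\Omega_1 = \{\psi^\lambda > \mu_1^\lambda\}, \qquad \Omega_2 = \{\psi^\lambda < -\mu_2^\lambda\}
\]
on all of $D$. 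This follows from the lower bound $\mu_i^\lambda \geq c_i \log\lambda$ proved in the preceding subsection, combined with the Green-function bound $|\psi^\lambda|\le C$ on $D\setminus(B_1\cup B_2)$ (uniform in $\lambda$) and the sign control inside each $B_i\setminus\Omega_i$ provided by Remark \ref{670}.

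With the global level-set description in hand, the bathtub (Hardy--Littlewood) inequality, applied to $\psi^\lambda - \mu_1^\lambda$ on the positive piece and $-(\psi^\lambda + \mu_2^\lambda)$ on the negative piece, yields the escape penalty
\[
\int_D \psi^\lambda(\tilde\omega - \omega^\lambda)\,dx \le -\tfrac{\lambda}{2}\bigl(\mu_1^\lambda\,|A_1\setminus B_1| + \mu_2^\lambda\,|A_2\setminus B_2|\bigr) \le -c\lambda(\log\lambda)\,\eta,
\]
since on $A_i\setminus B_i$ the quantity $\psi^\lambda - \mathrm{sgn}(\kappa_i)\mu_i^\lambda$ has the appropriate sign and magnitude at least $\mu_i^\lambda/2$ for $\lambda$ large, while the bathtub principle shows that the remaining "interior" contributions are non-positive. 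Coupling this with the standard 2D logarithmic-potential bound
\[
E(\tilde\omega - \omega^\lambda) \le C\|\tilde\omega - \omega^\lambda\|_{L^1}^{\,2}\bigl(\log\lambda + |\log\|\tilde\omega - \omega^\lambda\|_{L^1}|\bigr),
\]
together with the elementary estimate $\lambda\eta\le\tfrac12\|\tilde\omega - \omega^\lambda\|_{L^1}\le\delta/2$ (since $|\tilde\omega - \omega^\lambda|\ge\lambda$ wherever $\tilde\omega \ne \omega^\lambda$), a sufficiently small choice of $\delta$ ensures that the escape penalty dominates, giving $E(\tilde\omega)\le E(\omega^\lambda)$.

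\textbf{The hard part.} The delicate issue is the balancing of logarithmic factors: the escape penalty is of order $\lambda(\log\lambda)\eta$, while the 2D Newtonian potential can inflate the self-energy by a factor of $\log\lambda$. The upgrade of the level-set description of $\Omega_i$ from local to global is the essential ingredient, since without it the bathtub inequality only controls interior rearrangements inside $B_i$ and leaves the escape regime uncontrolled.
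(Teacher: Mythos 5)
Your decomposition $E(\tilde\omega)-E(\omega^\lambda)=\int_D\psi^\lambda(\tilde\omega-\omega^\lambda)\,dx+E(\tilde\omega-\omega^\lambda)$ puts the quadratic term on the wrong side: it is non-negative and must be absorbed by the linear term, but the only negativity you extract from the linear term is the escape penalty $-c\lambda(\log\lambda)\,\eta$ with $\eta=|A_1\setminus B_1|+|A_2\setminus B_2|$. This does not control $E(\tilde\omega-\omega^\lambda)$, because $\eta$ can be arbitrarily small compared with $m:=\|\tilde\omega-\omega^\lambda\|_{L^1}$. Concretely, take $A_1$ to consist of a sizable interior rearrangement of $\Omega_1$ inside $B_1$ concentrated near the level set $\{\psi^\lambda=\mu_1^\lambda\}$ (so the interior bathtub deficit in the linear term is essentially zero, while the self-energy of this part is genuinely positive) together with a tiny piece of measure $\eta_1\ll m/\lambda$ pushed outside $B_1$. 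Your final inequality then reads $-c\lambda(\log\lambda)\eta+Cm^2(\log\lambda+|\log m|)\le 0$, which fails whenever $\lambda\eta\ll m^2$, and no choice of $\delta$ rescues this since $\eta$ and $m$ are essentially independent parameters. Note that your estimate $\lambda\eta\le\tfrac12 m$ goes the wrong way: it bounds the escape penalty from above, not from below. (Two smaller points: the lower bound $\mu_i^\lambda\ge c_i\log\lambda$ for \emph{each} $i$ is not what the preceding subsection proves --- it only gives a lower bound on the weighted sum $\sum_i|\kappa_i|\mu_i^\lambda$ --- though an individual bound can be derived from Lemma \ref{351}; and the global level-set upgrade is fine.)

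The paper avoids this difficulty by never linearizing at $\omega^\lambda$. Given a competitor $\omega_n$, it constructs $\bar\omega_n=\lambda\bigl(I_{\{\psi_n>\nu_{n,1}\}}-I_{\{\psi_n<-\nu_{n,2}\}}\bigr)$ from level sets of the \emph{competitor's own} stream function $\psi_n$; Lemma \ref{576} (non-degeneracy of $\nabla\psi^\lambda$ on $\partial\Omega_i$) is exactly what guarantees that such level sets of the correct measure exist and remain inside $B_i$ for $\omega_n$ close to $\omega^\lambda$. Expanding around $\omega_n$ gives $E(\bar\omega_n)-E(\omega_n)=\int_D\psi_n(\bar\omega_n-\omega_n)\,dx+\tfrac12\int_D|\nabla(\bar\psi_n-\psi_n)|^2\,dx$, and now \emph{both} terms are non-negative: the quadratic term carries the favorable sign because one moves toward the bathtub maximizer of $\langle\psi_n,\cdot\rangle$ rather than away from $\omega^\lambda$. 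Since $\bar\omega_n\in F_{\omega^\lambda}\cap K_\lambda$, the constrained maximality gives $E(\omega_n)\le E(\bar\omega_n)\le E(\omega^\lambda)$ with no logarithmic balancing at all. To repair your argument you would have to show that the self-energy of the interior-rearrangement part is itself non-positive after a suitable recombination --- which is, in effect, the paper's device.
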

\begin{proof}

First we take $\lambda$ large enough such that the conclusions in Lemma \ref{576} hold. For such fixed $\lambda$, we prove this theorem by contradiction in the following.

Suppose that $\omega^\lambda$ is not a local energy maximizer $F_{\omega^\lambda}$, then we can choose a sequence $\{\omega_n\}$ and $\omega_n\in F_{\omega^\lambda}$, $0<|\omega_n-\omega^\lambda|_{L^1}<\frac{1}{n}$, and
  \begin{equation}\label{99}
E(\omega_n)> E(\omega^\lambda).
\end{equation}
By the non-degenerate property of $\nabla\psi^\lambda$ on $\partial\Omega_i$ given in Lemma \ref{576}, for large $n$ there exist $\nu_{n,1}>0$ and $\nu_{n,2}>0$ determined uniquely such that

 (1) $\partial\{\psi_n>\nu_{n,1}\}$, $\partial\{\psi_n<-\nu_{n,2}\}$ are both $C^1$ closed curves, where $\psi_n$ is the stream function of $\omega_n$;

  (2) $|\{\psi_n>\nu_{n,1}\}|=|\{\psi^\lambda>\mu^\lambda_1\}|$ and $|\{\psi_n<-\nu_{n,1}\}|=|\{\psi^\lambda<-\mu^\lambda_1\}|$;

(3) $\{\psi_n>\nu_{n,1}\} \subset B_1$ and $\{\psi_n<-\nu_{n,2}\} \subset B_2$.

 For a detailed proof of the existence of such $\nu_{n,1},\nu_{n,2}$, the reader can refer to Lemma 3.1 in \cite{CW}.

Now define $\bar{\omega}_n=\lambda (I_{\{\psi_n>\nu_{n,1}\}}-I_{\{\psi_n<-\nu_{n,2}\}})$. It is easy to verify $\bar{\omega}_n\in F_{\omega^\lambda}\cap K_\lambda(D)$, so
  \begin{equation}\label{438}
  E(\bar{\omega}_n)\leq E(\omega^\lambda).
  \end{equation}

On the other hand,
\[
\begin{split}
E(\bar{\omega}_n)-E(\omega_n)
=&\frac{1}{2}\int_D  \bar{\omega}_n\bar{\psi_n}-\frac{1}{2}\int_D \omega_n\psi_n\\
=&\int_D\psi_n(\bar{\omega}_n-\omega_n)+\frac{1}{2}\int_D(\bar{\psi}_n-\psi_n)(\bar{\omega}_n-\omega_n), \\
\end{split}
\]
where we use $\int_D\psi_n\bar{\omega}_n=\int_D\bar{\psi}_n\omega_n$ by the symmetry of Green function. Integrating by parts we have
\[\frac{1}{2}\int_D(\bar{\psi}_n-\psi_n)(\bar{\omega}_n-\omega_n)=\frac{1}{2}\int_D|\nabla(\bar{\psi}_n-\psi_n)|^2,\]
so
\[E(\bar{\omega}_n)-E(\omega_n)\geq\int_D\psi_n(\bar{\omega}_n-\omega_n).\]

Now we claim that $\int_D\psi_n(\bar{\omega}_n-\omega_n)\geq 0.$ To show this, we write $\omega_n=\lambda(I_{A_1}-I_{A_2})$ with $|A_1|=|{\psi_n>\nu_{n,1}}|$ and $|A_2|=|{\psi_n<-\nu_{n,2}}|.$  Now we calculate

\begin{equation}
\begin{split}
&\int_D\psi_n(\bar{\omega}_n-\omega_n)\\
=&\int_D\psi_n\bar{\omega}_n-\int_D\psi_n\omega_n\\
=&\lambda\int_{\{\psi_n>\nu_{n,1}\}}\psi_n-\lambda\int_{\{\psi_n<-\nu_{n,2}\}}\psi_n-\lambda\int_{A_1}\psi_n+\lambda\int_{A_2}\psi_n\\
=&\lambda(\int_{\{\psi_n>\nu_{n,1}\}}\psi_n-\int_{A_1}\psi_n)+\lambda(\int_{A_2}\psi_n-\int_{\{\psi_n<-\nu_{n,2}\}}\psi_n)\\
\geq&0.
\end{split}
\end{equation}
So we have \begin{equation}\label{819} E(\bar{\omega}_n)\geq E(\omega_n).\end{equation}
\eqref{99}, \eqref{438} and \eqref{819} together lead to a contradiction.

\end{proof}
\subsection{Uniqueness implies stability} In the last of this paper we show that uniqueness implies stability.

We state the following open problem of local uniqueness first.

$\mathbf{Open\,\, problem}$. Is there a $\lambda_0$ such that for any $\lambda>\lambda_0$ the following problem has a unique solution?

\begin{equation}
\begin{cases}
\omega=\lambda (I_{\{\psi >\mu_1\}}-I_{\{\psi <-\mu_2\}})\text{ for some $\mu_1,\mu_2\in\mathbb R$},\\
\lambda|\{\psi >\mu_1\}|=\kappa_1,\lambda|\{\psi <-\mu_2\}|=-\kappa_2,\\
\psi(x)=\int_DG(x,y)\omega(y)dy,\\
\{sgn(\kappa_i)\psi>\mu_i\}\subset B_i.
\end{cases}
\end{equation}
If the answer to the above open problem is yes, then immediately we know that $\omega^\lambda$ is the unique maximizer of $E$ in $K_\lambda(D)$. In the case $\kappa_2=0$(a single vortex patch), local uniqueness has been proved in \cite{CGPY} using the stream function method.

Combining the result of Burton, we have the following theorem.
\begin{theorem}
$\omega^\lambda$ is stable if it is the unique maximizer of $E$ in $K_\lambda(D).$
\end{theorem}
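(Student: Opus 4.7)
The plan is to invoke Burton's Theorem \ref{886}: it suffices to show that $\omega^\lambda$ is a \emph{strict} local maximizer of the kinetic energy $E$ relative to $F_{\omega^\lambda}$, in the precise sense stated there. Theorem \ref{385} already provides local maximality; the role of the uniqueness hypothesis is to upgrade this to strictness.

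Concretely, I would fix $\delta_0>0$ smaller than the neighborhood radius from Theorem \ref{385} and suppose $\tilde\omega\in F_{\omega^\lambda}$ with $|\tilde\omega-\omega^\lambda|_{L^1}<\delta_0$ and $E(\tilde\omega)=E(\omega^\lambda)$. Mirroring the construction in the proof of Theorem \ref{385}, I would form
\[
\bar\omega=\lambda\bigl(I_{\{\tilde\psi>\nu_1\}}-I_{\{\tilde\psi<-\nu_2\}}\bigr),\qquad \tilde\psi(x)=\int_DG(x,y)\tilde\omega(y)\,dy,
\]
choosing the levels $\nu_1,\nu_2>0$ via Lemma \ref{576} so that the two superlevel/sublevel sets are $C^1$ domains contained in $B_1$ and $B_2$ respectively and have measures $|\kappa_1|/\lambda$ and $|\kappa_2|/\lambda$. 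This gives $\bar\omega\in F_{\omega^\lambda}\cap K_\lambda(D)$.

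Next, I would reuse the symmetric-kernel plus Dirichlet-energy identity from Theorem \ref{385},
\[
E(\bar\omega)-E(\tilde\omega)=\int_D\tilde\psi\,(\bar\omega-\tilde\omega)+\tfrac12\int_D|\nabla(\bar\psi-\tilde\psi)|^2,
\]
together with the bathtub-type computation showing that the first summand on the right is nonnegative. Hence $E(\bar\omega)\ge E(\tilde\omega)=E(\omega^\lambda)$. Since $\bar\omega\in K_\lambda(D)$ and $\omega^\lambda$ achieves $\sup_{K_\lambda(D)}E$, we get $E(\bar\omega)=\sup_{K_\lambda(D)}E$, so by the uniqueness assumption $\bar\omega=\omega^\lambda$. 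Plugging this back, the left-hand side of the displayed identity is zero, both nonnegative summands on the right vanish, and in particular $\nabla(\bar\psi-\tilde\psi)\equiv 0$. Since $\bar\psi-\tilde\psi$ has zero boundary values, $\bar\psi=\tilde\psi$, whence $\tilde\omega=-\Delta\tilde\psi=-\Delta\bar\psi=\bar\omega=\omega^\lambda$. This is exactly strict local maximality, and Burton's Theorem \ref{886} then yields stability.

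The main obstacle is ensuring that the level-set construction of $\bar\omega$ is valid \emph{uniformly} for every $\tilde\omega$ in a single $L^1$-neighborhood of $\omega^\lambda$. This requires transferring $L^1$-closeness of vorticities to $C^1$-closeness of stream functions via standard elliptic regularity, and then using the non-degeneracy $\partial\psi^\lambda/\partial\vec n<0$ on $\partial\Omega_i$ from Lemma \ref{576} to pick $\nu_1,\nu_2$ with the right properties. Since this is exactly the ingredient already used in the proof of Theorem \ref{385}, shrinking $\delta_0$ if necessary covers it, and no new analysis is needed beyond organizing the argument.
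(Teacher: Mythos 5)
Your proposal is correct and follows essentially the same route as the paper: reduce to strict local maximality via Burton's Theorem \ref{886}, reuse the level-set construction and the energy identity from the proof of Theorem \ref{385} to produce $\bar\omega\in F_{\omega^\lambda}\cap K_\lambda(D)$ with $E(\bar\omega)\ge E(\tilde\omega)$, and then invoke uniqueness of the maximizer in $K_\lambda(D)$. In fact, your explicit handling of the equality case---concluding $\bar\omega=\omega^\lambda$ from uniqueness and then using the vanishing of the Dirichlet term $\frac12\int_D|\nabla(\bar\psi-\tilde\psi)|^2$ to force $\tilde\omega=\omega^\lambda$---fills in a step that the paper's own proof passes over rather tersely.
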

\begin{proof}
It suffices to prove that $\omega^\lambda$ is a strict local maximizer of $E$ in $F_{\omega_\lambda}$, or equivalently we need to prove that there exists a $\delta_0>0$ such that for any $\omega\in F_{\omega^\lambda} ,0<|\omega-\omega^\lambda|<\delta_0$, we have $E(\omega)< E(\omega^\lambda)$.

From the proof of Theorem \ref{385}, we know that there exists a $\delta_0>0$ such that for any $\omega\in F_{\omega^\lambda}, 0<|\omega-\omega^\lambda|<\delta_0$, there exists a $\bar{\omega}\in F_{\omega^\lambda}\cap K_\lambda(D)$ satisfying $E(\bar{\omega})\geq E(\omega)$. Since $\omega^\lambda$ is a strict local maximizer of $E$ in $F_{\omega_\lambda}$, we have $E(\bar{\omega})< E(\omega^\lambda)$. So we have $E(\omega)< E(\omega^\lambda)$,
which is the desired result.
\end{proof}

\end{document}